\def\e{\e}
\def\H{\mathbb{H}}
\def\Mt{\widetilde M}
\def\S{\Sigma}
\def\V{\mathcal{V}}
\def\W{\mathcal{W}}
\def\Xt{\widetilde{X}}
\def\e{\epsilon}
\def\half{\frac{1}{2}}
\def\ge{\geqslant}
\def\le{\leqslant}
\DeclareSymbolFont{AMSb}{U}{msb}{m}{n}
\DeclareMathSymbol{\Sph}{\mathbin}{AMSb}{"53}\DeclareMathSymbol{\Tor}{\mathbin}{AMSb}{"54}
\DeclareMathSymbol{\R}{\mathbin}{AMSb}{"52} \DeclareMathSymbol{\PP}{\mathbin}{AMSb}{"50}
\DeclareMathSymbol{\T}{\mathbin}{AMSb}{"54} \DeclareMathSymbol{\Z}{\mathbin}{AMSb}{"5A}
\DeclareMathSymbol{\C}{\mathbin}{AMSb}{"43} \DeclareMathSymbol{\E}{\mathbin}{AMSb}{"45}
\DeclareMathSymbol{\K}{\mathbin}{AMSb}{"4B}\DeclareMathSymbol{\N}{\mathbin}{AMSb}{"4E}
\newcommand{\twid}[1]{\widetilde{#1}}
\def\T{{\cal{T}}}
\def\V{{\cal{V}}}
\newcommand{\norm}[1]{\left|#1\right|}
\newtheorem{theorem}{Theorem}[section]
\newtheorem{proposition}[theorem]{Proposition}
\theoremstyle{definition}
\newtheorem{claim}[theorem]{Claim}
\newtheorem{definition}[theorem]{Definition}
\def\inj{\text{inj}}
\begin{document}

\title{Morse functions to graphs and topological complexity for
  hyperbolic $3$-manifolds} 
\author{Diane Hoffoss and Joseph Maher}
\date{\today}

\maketitle

\tableofcontents

\begin{abstract}
Scharlemann and Thompson define the width of a 3-manifold $M$ as a
notion of complexity based on the topology of $M$.  Their original
definition had the property that the adjacency relation on handles
gave a linear order on handles, but here we consider a more general
definition due to Saito, Scharlemann and Schultens, in which the
adjacency relation on handles may give an arbitrary graph.  We show
that for compact hyperbolic $3$-manifolds, this is linearly related to
a notion of metric complexity, based on the areas of level sets of
Morse functions to graphs, which we call Gromov area.
\end{abstract}

\section{Introduction}

Mostow rigidity implies that for hyperbolic $3$-manifolds, the
hyperbolic metric is a topological invariant, so one might hope that
the topological and metric complexities are related. We shall show
that this is indeed the case for certain definitions of topological
and metric complexity. We first describe the notions of complexity we
shall use, and then give a brief outline of the arguments used to
relate topological and metric complexity in the subsequent sections.
In \cite{hoffoss-maher} we considered the linear version of these
invariants, while in this paper we consider the more general case of
invariants constructed from maps to graphs.  It will be convenient to
work with the collection of hyperbolic $3$-manifolds which are
complete, but do not contain cusps, and are not necessarily of finite
volume.  The reason for not considering manifolds with cusps is that
in the cusped case the surfaces separating the $3$-dimensional regions
in the topological decomposition we construct from the metric might
have essential intersection with the cusps.  In other words, if the
cusps are truncated to form a hyperbolic manifold with torus boundary
components, then the dividing surfaces may be surfaces with essential
boundary components on the boundary tori.  However, the currently
available versions of the topological decomposition results we use,
due to Scharlemann-Thompson and Saito-Scharlemann-Schultens, assume
that the dividing surfaces are closed.

This paper is not entirely self-contained, and relies on the results
of \cite{hoffoss-maher}, however we review the main definitions and
results from \cite{hoffoss-maher} for the convenience of the reader.

\subsection{Metric complexity}


In \cite{hoffoss-maher}, we considered the following definition of
metric complexity.  Let $M$ be a closed Riemannian $3$-manifold, and
let $f \colon M \to \R$ be a Morse function, i.e. $f$ is a smooth
function, all critical points are non-degenerate, and distinct
critical points have distinct images in $\R$. We define the
\emph{area} of $f$ to the maximum area of any level set
$F_t = f^{-1}(t)$ over all points $t \in \R$.  We define the
\emph{Morse area} of $M$ to be the infimum of the area of all Morse
functions $f \colon M \to \R$.

More generally, we may consider maps $f \colon M \to X$, where $X$ is
a trivalent graph.  Recall that for a Morse function $f \colon M \to
\R$ there are singularities of index $0, 1, 2$ and $3$. The
singularities of index $0$ and $3$ are known as birth or death
singularities respectively, and the level set foliation near the
singular point in $M$ is locally homeomorphic to the level sets of the
function $x^2 + y^2 + z^2$ close to the origin in $\R^3$. For
singularities of index $2$ and $3$, the level sets near the singular
point in $M$ are locally homeomorphic to the level sets of the
function $x^2 + y^2 - z^2$ close to the origin in $\R^3$.  

In the case of index $2$ or $3$, there is a map from a small open ball
containing the singular point to the leaf space of the level set
foliation. As the singular leaf divides a small ball about the
singular point into three connected components, the leaf space is a
trivalent graph with a single vertex and three edges, and we call such
a map a \emph{trivalent singularity}. If $X$ is a trivalent tree, we
say a map $f \colon M \to X$ is \emph{Morse} if it is a Morse function
on the interior of each edge of $X$, and at each trivalent vertex $v$
of $X$ the pre-image under $f$ is locally homeomorphic to a trivalent
singularity.  We say that the area of $f$ is the maximum area of
$F_t$, as $t$ runs over all points $t \in X$. The \emph{Gromov area}
of $M$ is the infimum of the area of $f \colon M \to X$ over all
trivalent graphs $X$, and all Morse functions $f \colon M \to X$.


This definition of metric complexity is a variant of Uryson width,
studied by Gromov in \cite{gromov}, though we consider the area of the
level sets instead of the diameter.  Alternatively, one may consider
it to be a variant of the definition of the waist of a manifold, but
we prefer to call it area, as the dimension of our spaces is fixed,
and the fibers have dimension two.

\subsection{Topological complexity}

We now describe the notions of topological complexity we shall
consider. A \emph{handlebody} is a compact $3$-manifold with boundary,
homeomorphic to a regular neighborhood of a graph in
$\mathbb{R}^3$. Up to homeomorphism, a handlebody is determined by the
genus $g$ of its boundary surface.  Every $3$-manifold $M$ has a
\emph{Heegaard splitting}, which is a decomposition of the manifold
into two handlebodies. This immediately gives a notion of complexity
for a $3$-manifold, called the \emph{Heegaard genus}, which is the
smallest genus of any Heegaard splitting of the $3$-manifold.

There is a refinement of this, due to Scharlemann and Thompson
\cite{st}, which we now describe. A \emph{compression body} $C$ is a
compact $3$-manifold with boundary, constructed by gluing some number
of $2$-handles to one side of a compact (but not necessarily
connected) surface cross interval and capping off any resulting
$2$-sphere components with $3$-balls.  The side of the surface cross
interval with no attached $2$-handles is called the \emph{top
  boundary} of the compression body and denoted by $\partial_+ C$, and
any other boundary components are called the \emph{lower boundary} of
the compression body, and denoted by $\partial_- C$.  A \emph{linear
  generalized Heegaard splitting},\footnote{We warn the reader that
  these are often referred to as generalized Heegaard splittings in
  the literature; however we wish to distinguish them from a more
  general notion described subsequently, which is also occasionally
  referred to in the literature as a generalized Heegaard splitting.}
which we shall abbreviate to \emph{linear splitting}, is a
decomposition of a $3$-manifold $M$ into a linearly ordered sequence
of (not necessarily connected) compression bodies
$C_1, \ldots C_{2n}$, such that the top boundary of an odd numbered
compression body $C_{2i+1}$ is equal to the top boundary of the
subsequent compression body $C_{2i+2}$, and the lower boundary of
$C_{2i+1}$ is equal to the lower boundary of the previous compression
body $C_{2i}$.  Let $H_i$ be the sequence of surfaces consisting of
the top boundaries of the compression bodies $C_{2i-1}$ and
$C_{2i}$. The complexity $c(H_i)$ of the surface $H_i$ is the sum of
the genera of each connected component, and the complexity of the
linear splitting is the collection of integers $\{c(H_i)\}$, arranged
in decreasing order. We order these complexities with the
lexicographic ordering. The \emph{width} of the linear splitting is
the maximum value (i.e. the first value) of $c(H_i)$ in the collection
$\{c(H_i)\}$. The \emph{linear width} of a $3$-manifold $M$ is the
minimum width over all possible linear generalized Heegaard
splittings. As a Heegaard splitting is a special case of a linear
splitting, the Heegaard genus of $M$ is an upper bound for the linear
width of $M$.  A linear splitting which gives the minimum complexity
of all possible linear splittings is called the \emph{thin position}
linear splitting.

There is a further refinement of this, described in Saito, Scharlemann
and Schultens \cite{sss}. A \emph{graph generalized Heegaard
  splitting}, which we shall abbreviate to \emph{graph splitting}, and
is called a \emph{fork complex} in \cite{sss}, is a decomposition of a
compact $3$-manifold $M$ into compression bodies $\{ C_i \}$, such
that for each compression body $C_i$, there is a compression body
$C_j$ such that the top boundary of $C_i$ is equal to the top boundary
of $C_j$. Furthermore, for each component of the lower boundary of
$C_i$, there is a compression body $C_k$, such that that component of
the lower boundary of $C_i$ is equal to a component of the lower
boundary of $C_k$. We emphasize that different components of the lower
boundary of $C_i$ may be attached to lower boundary components of
different compression bodies. Let $\{ H_i \}$ be the collection of top
boundary surfaces. The complexity of the graph splitting is the
collection of integers $\{c(H_i)\}$, arranged in decreasing
order. Again, we put the lexicographic ordering on these
complexities. A graph splitting which realizes the minimum complexity
is called a \emph{thin position graph splitting}. The \emph{width} of
the graph splitting is the maximum integer (i.e the first integer)
that appears in the complexity. The \emph{graph width} of a
$3$-manifold $M$ is the minimum width over all possible graph
splittings of $M$.  As a linear splitting is a special case of a graph
splitting, the linear width of $M$ is an upper bound for the graph
width of $M$.  The graph corresponding to the graph splitting is the
graph whose vertices are compression bodies, with edges connecting
pairs of compression bodies with common boundary components.

\subsection{Results}

In order to bound metric complexity in terms of topological complexity
we shall assume the following result announced by Pitts and Rubinstein
\cite{pr} (see also Rubinstein \cite{rubinstein}).

\begin{theorem} \cites{pr,rubinstein} \label{conjecture:pr} %
Let $M$ be a Riemannian $3$-manifold with a strongly irreducible
Heegaard splitting. Then the Heegaard surface is isotopic to a
minimal surface, or to the boundary of a regular neighborhood of a
non-orientable minimal surface with a small tube attached vertically
in the I-bundle structure.
\end{theorem}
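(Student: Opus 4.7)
The plan is to prove this via a min-max argument in the style of Almgren--Pitts, using the Heegaard splitting to produce a sweep-out whose max-area is then minimized over an isotopy class.

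First, I would set up the sweep-out framework. A Heegaard splitting $M = V_1 \cup_\Sigma V_2$ with Heegaard surface $\Sigma$ gives a singular foliation of $M$ by surfaces parallel to $\Sigma$, degenerating to the two spines $\Gamma_1 \subset V_1$ and $\Gamma_2 \subset V_2$; equivalently, a map $\phi \colon M \to [0,1]$ whose generic level sets are isotopic to $\Sigma$. Let $\Lambda$ be the set of all sweep-outs obtained by precomposing $\phi$ with an ambient isotopy of $M$ (or more generally the saturation of $\phi$ under a suitable class of deformations used in min-max theory). Define the \emph{width}
\[
W = \inf_{\psi \in \Lambda} \; \max_{t \in [0,1]} \, \area\bigl(\psi^{-1}(t)\bigr).
\]
By a standard pull-tight argument, one produces a min-max sequence $\{\Sigma_n = \psi_n^{-1}(t_n)\}$ with $\area(\Sigma_n) \to W$ and whose varifold limit is stationary.

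The second step is to apply the regularity theory of Pitts (with refinements by Simon--Smith in the smooth setting, since we are working with a fixed topological type of surface) to conclude that, after passing to a subsequence, the $\Sigma_n$ converge as varifolds, with integer multiplicity, to a smooth embedded minimal surface $\Sigma_\infty$, possibly with multiplicity and with components that may be one-sided. At this stage, $\Sigma_\infty$ is a disjoint union of minimal components with multiplicities, and the genus may have dropped relative to $\Sigma$.

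The main obstacle, and the place where strong irreducibility enters decisively, is to rule out the ways the limit can differ topologically from $\Sigma$. The Heegaard surface, being strongly irreducible, does not admit disjoint compressions on its two sides, whereas any multiplicity drop or neck-pinch along the min-max sequence would produce a simultaneous compression on both sides of an approximating isotopic copy of $\Sigma$ (since such an approximant bounds the region where the min-max squeeze occurred). Making this precise is the technical heart: one argues that if $\Sigma_\infty$ appears with multiplicity one and is two-sided, then by a surgery/isotopy argument $\Sigma$ is isotopic to $\Sigma_\infty$; if $\Sigma_\infty$ has a one-sided component $S$, then $\Sigma_\infty$ with multiplicity two is approximated by the boundary of a tubular neighborhood of $S$, and strong irreducibility forces $\Sigma$ to be isotopic to $\partial N(S)$ with an extra vertical tube attached in the $I$-bundle $N(S)$, which is exactly the second alternative in the statement. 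Verifying that these are the only two possibilities (as opposed to genus-dropping or stabilization) is where the strong irreducibility hypothesis is genuinely needed and where I expect the bulk of the work to lie.
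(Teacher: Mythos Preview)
The paper does not prove this theorem at all. It is stated as an announced result of Pitts and Rubinstein, explicitly flagged as an assumption: the authors write that they ``shall assume the following result announced by Pitts and Rubinstein,'' and immediately afterward note that ``a full proof of this result has not yet appeared in the literature,'' citing partial progress by Colding--De Lellis, De Lellis--Pellandrini, and Ketover. Every subsequent use of it in the paper (the right-hand inequalities in Theorems \ref{theorem:linear} and \ref{theorem:graph}) is conditional on this assumption.

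So there is no proof in the paper to compare your proposal against. What you have written is a reasonable sketch of the min-max strategy that the Pitts--Rubinstein program is based on, and the references the paper cites (Colding--De Lellis for the min-max construction, De Lellis--Pellandini for genus bounds, Ketover for the degeneration analysis) are precisely the papers that flesh out the steps you outline. Your identification of the main difficulty---controlling the topology of the limit and ruling out multiplicity or genus drop, with strong irreducibility as the key hypothesis---is accurate, and that is exactly the part that remained incomplete at the time the paper was written. But you should be aware that you are not reconstructing a proof the authors gave; you are sketching a proof of a result they took as a black box.
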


A full proof of this result has not yet appeared in the literature,
though recent progress has been made by Colding and De Lellis
\cite{cdl}, De Lellis and Pellandrini \cite{dlp}, and Ketover
\cite{ketover}.

In \cite{hoffoss-maher} we showed:

\begin{theorem} \label{theorem:linear} %
There is a constant $K > 0$, such that for any closed hyperbolic
$3$-manifold,
\begin{equation} 
K ( \text{linear width}(M) ) \le \text{Morse area}(M) \le 4 \pi
(\text{linear width}(M)), 
\end{equation}
where the right hand bounds hold assuming Theorem
\ref{conjecture:pr}.
\end{theorem}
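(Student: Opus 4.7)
The plan is to establish the two inequalities separately: the upper bound by constructing a low-area Morse function from a thin linear splitting, and the lower bound by extracting a narrow linear splitting from a low-area Morse function. Both directions exploit the rigid area-topology relationship in negative curvature, through Gauss-Bonnet on one side and the Pitts-Rubinstein minimal surface theorem on the other.

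For the upper bound $\text{Morse area}(M) \le 4\pi\, \text{linear width}(M)$, I would start with a thin position linear splitting $C_1, \ldots, C_{2n}$ of $M$ realizing width $w$. Using the standard untelescoping procedure of Scharlemann-Thompson, one may assume each top-boundary surface $H_i$ is a strongly irreducible Heegaard surface for the submanifold it bounds between consecutive thin levels. Applying Theorem \ref{conjecture:pr} replaces each $H_i$ by an isotopic minimal surface (or doubles a non-orientable one, at worst adjusting constants), and Gauss-Bonnet in curvature $-1$ then gives
\begin{equation*}
\area(H_i) \le -2\pi\chi(H_i) \le 4\pi\, c(H_i) \le 4\pi w.
\end{equation*}
Using the compression body structure, one builds a Morse function $f \colon M \to \R$ whose level sets sweep across each compression body from the minimal surface $H_i$ through the compressing disks; these sweeps can be carried out in a tubular neighborhood and the subsequent handle attachments only decrease area, so the maximum level set area is bounded by $4\pi w$.

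For the lower bound $K\, \text{linear width}(M) \le \text{Morse area}(M)$, take a Morse function $f \colon M \to \R$ of area close to $\text{Morse area}(M)$. Grouping the critical values of $f$ appropriately, the preimages of intervals between thin levels form compression bodies, so $f$ directly yields a linear generalized Heegaard splitting of $M$. It then suffices to prove a universal inequality $c(F_t) \le (1/K)\, \area(F_t)$ for closed embedded level surfaces $F_t \subset M$.

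The main obstacle is precisely this area-to-complexity estimate for embedded surfaces in the hyperbolic manifold $M$. In the thick part of $M$ (where the injectivity radius exceeds the Margulis constant), standard monotonicity and Gauss-Bonnet arguments furnish a bound $\area(F_t \cap M_{\text{thick}}) \ge c_1\, g_{\text{thick}}(F_t)$ for a universal $c_1 > 0$. The delicate case is the thin part: a surface may wrap many times along short geodesics inside Margulis tubes, acquiring many handles while contributing arbitrarily little area. The resolution is to compress $F_t$ along meridian disks of the Margulis tubes and isotope longitudinal annuli out, incurring only a bounded change in the width of the induced linear splitting and in the Morse area, and thereby reducing to the thick case where the Gauss-Bonnet estimate applies.
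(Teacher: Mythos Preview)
Your upper bound sketch is close in spirit to the paper's argument, but the assertion that ``the subsequent handle attachments only decrease area'' is not justified: during a sweepout from a minimal Heegaard surface through the compressions, the intermediate surfaces need not have area below that of the minimal surface. The paper (and \cite{hoffoss-maher}) appeals instead to the Colding--Gabai result, which uses mean curvature flow (Colding--Minicozzi) to produce a foliation whose leaves genuinely have area at most $4\pi g$. So the strategy is the same, but this step requires a nontrivial analytic input you have not supplied.

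The lower bound has a genuine gap. The inequality $c(F_t) \le (1/K)\,\area(F_t)$ is simply false for an arbitrary embedded level set $F_t$, even in the thick part. Monotonicity and Gauss--Bonnet give area lower bounds for \emph{minimal} surfaces, but a level set of a Morse function has no curvature control: inside a single small ball in $M_{\text{thick}}$ one can embed a closed surface of arbitrarily high genus and arbitrarily small area, and such a surface can appear as a regular level set of a Morse function. So the problem you identify in the thin part---genus without area---is equally present in the thick part, and compressing along Margulis-tube meridians does not address it.

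The paper's approach is therefore fundamentally different. Rather than bounding the genus of $F_t$ itself, one replaces $F_t$ by a combinatorial surrogate: fix an $\epsilon$-regular Voronoi decomposition, partition the cells according to which side of $F_t$ contains the majority of the $\epsilon/2$-ball about each center, and take the boundary of this partition as a \emph{polyhedral surface} $S_t$. The key estimate (a packing argument, not Gauss--Bonnet) is that the number of Voronoi faces of $S_t$ in the thick part is bounded by $K\,\area(F_t)$, because each face forces $F_t$ to bisect an embedded $\epsilon/2$-ball. One then caps off $S_t$ in the thin part by bounded-genus pieces to obtain a \emph{capped surface} $S_t^+$ with $\text{genus}(S_t^+)\le K\,\area(F_t)$. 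Finally, one checks that as $t$ varies the capped surfaces change only at finitely many \emph{cell splitters}, and the complementary regions between them are built from a bounded number of handles, yielding the required linear splitting of width $\le K\,\text{Morse area}(M)$. The missing idea in your proposal is precisely this replacement of $F_t$ by a combinatorial surface whose complexity is honestly controlled by area.
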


In this paper we show:

\begin{theorem} \label{theorem:graph} %
There is a constant $K > 0$, such that for any closed hyperbolic
$3$-manifold,
\begin{equation}
 K ( \text{graph width}(M) ) \le \text{Gromov area}(M) \le 4 \pi
(\text{graph width}(M)), 
\end{equation}
where the right hand bounds hold assuming Theorem
\ref{conjecture:pr}.
\end{theorem}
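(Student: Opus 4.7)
I would split the theorem into the two inequalities and adapt the strategy of Theorem~\ref{theorem:linear} from \cite{hoffoss-maher}, with the key new difficulty being that the maps (to a trivalent graph) and the decompositions (graph splittings) now branch rather than being linear. The main extra data are the trivalent vertices on each side, and one needs to show that they correspond under translation from one side to the other.

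For the upper bound $\text{Gromov area}(M) \le 4\pi \cdot \text{graph width}(M)$, I would begin with a thin graph splitting of width $w$, so all top boundary surfaces $H_i$ have complexity at most $w$. I would first replace the splitting with one whose individual Heegaard-type pieces are strongly irreducible (by an untelescoping argument on each compression-body pair), so that Theorem~\ref{conjecture:pr} lets me isotope each $H_i$ to a minimal surface (or the tubed variant). Gauss--Bonnet applied to a minimal surface in a hyperbolic $3$-manifold yields $\text{area}(H_i) \le 4\pi c(H_i) \le 4\pi w$. On each compression body $C_i$, the minimal surface at the top can then be swept toward the lower boundary by mean-convex surfaces, so intermediate level sets also have area at most $4\pi w$. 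Assembling these sweepouts produces a Morse map $f \colon M \to X$, where $X$ is a trivalent graph with a vertex for each compression body and edges recording adjacencies of boundary components; the trivalent vertices of $X$ correspond precisely to places where three compression bodies meet along a common lower boundary component, providing the required trivalent singularity model.

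For the lower bound $K \cdot \text{graph width}(M) \le \text{Gromov area}(M)$, I would start with a Morse map $f \colon M \to X$ whose area is within $\epsilon$ of $\text{Gromov area}(M)$. On the interior of each edge of $X$, ordinary Morse critical points give handle attachments that build up a chain of compression bodies. At each trivalent vertex of $X$, the local singularity model decomposes a neighborhood of the singular level into three compression bodies meeting along a common boundary component, producing a branching in the splitting graph. Piecing these together yields a graph splitting of $M$ whose underlying graph mirrors $X$. To bound its width, I would invoke the hyperbolic-geometry input used in \cite{hoffoss-maher}: there is a universal constant $K > 0$ such that any closed embedded surface $S$ in a closed hyperbolic $3$-manifold satisfies $c(S) \le \text{area}(S)/K$, proved by a Margulis-type packing argument in the thick part.

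The hardest step in both directions should be the trivalent vertex bookkeeping. On the upper-bound side, I must arrange the mean-convex sweeps within each compression body to terminate simultaneously on the prescribed shared lower boundaries, so that the sweeps glue into a single map to a graph. On the lower-bound side, I must verify that the local trivalent singularity model indeed produces three compression bodies sharing one boundary component, and that the branchings assemble consistently into a global graph splitting rather than merely a collection of local splittings. Once these compatibility issues are handled, the essential estimates (Gauss--Bonnet giving the factor $4\pi$ above, and the Margulis packing bound giving the constant $K$ below) are the same as in the linear case.
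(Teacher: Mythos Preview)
Your upper bound outline is essentially the paper's argument: thin position for graph splittings (Saito--Scharlemann--Schultens) already makes the even surfaces incompressible and the odd surfaces strongly irreducible, and then the Pitts--Rubinstein/Colding--Gabai sweepout on each pair of compression bodies produces a foliation with leaves of area at most $4\pi g$, which glue along the shared boundary leaves into a map to a graph.

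Your lower bound, however, has a genuine gap. The inequality you invoke---that every closed embedded surface $S$ in a closed hyperbolic $3$-manifold satisfies $c(S) \le \text{area}(S)/K$ for a universal $K$---is false. An embedded surface can have arbitrarily high genus and arbitrarily small area: stabilize a Heegaard surface many times inside a small ball, or attach thin handles to a small sphere. Gauss--Bonnet gives a bound in this direction only for \emph{minimal} surfaces. Hence the graph splitting you build directly from the handle attachments of $f$ carries no a priori width bound in terms of $\text{area}(f)$; the level sets can be topologically wild even when their area is small.

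The paper does not attempt to bound the genus of the level sets. It fixes an $\epsilon$-regular Voronoi decomposition and a thick--thin decomposition of $M$, and for each regular value $t$ replaces $f^{-1}(t)$ by a \emph{polyhedral surface} $S_t$ built from Voronoi $2$-cells, then caps $S_t$ off inside the Margulis tubes to obtain $S_t^+$. The packing argument you allude to bounds the number of thick-part faces of $S_t$ by $\text{area}(f^{-1}(t))$ (a face forces $f^{-1}(t)$ to bisect a ball of definite size), and this bounds $\text{genus}(S_t^+)$---not $\text{genus}(f^{-1}(t))$. There is a second issue you do not address: level sets of a map to a graph need not separate $M$, so the ``side containing most of the ball'' is not well-defined. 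The paper passes to the cover $\widetilde{M}$ of $M$ corresponding to $\ker(f_* \colon \pi_1 M \to \pi_1 X)$, where the target becomes a tree and every level set separates; the polyhedral and capped surfaces are constructed there, shown to be equivariantly disjointly embeddable, and then projected back to $M$. A final bounded-handles argument controls the complementary regions (including those containing a trivalent cell splitter), yielding the graph splitting of bounded width.
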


We also expect there to be upper and lower bounds on topological
complexity in terms of Uryson width, i.e. using diameter instead of
area, but we do not expect them to be linear.

\subsection{Related work in $3$-manifolds}

It may be of interest to compare our results with recent work of
Brock, Minsky, Namazi and Souto \cite{bmns} on manifolds with bounded
combinatorics. Let $C_1, \ldots C_n$ be a finite collection of
homeomorphism types of compact $3$-manifolds with marked boundary,
which we shall refer to as \emph{model pieces}, and fix a metric on
each one. A $3$-manifold $M$ is said to have \emph{bounded
  combinatorics} if it is a union of (possibly infinitely many) model
pieces glued together by homeomorphisms along their boundaries, with
certain restrictions on the gluing maps, which we do not describe in
detail here. In particular, a manifold with bounded combinatorics is a
manifold of bounded topological width. They show that such a manifold
$M$ is hyperbolic, with a lower bound on the injectivity radius, and
the hyperbolic metric is $K$-bilipshitz homeomorphic to the induced
metric on $M$ arising from the metrics on the model pieces. A choice
of foliation with compact leaves, containing the boundary leaves, on
each model piece then shows that the metric complexity is linearly
related to the topological complexity for this class of manifolds,
where the linear constants depend on the collection of model pieces.

Note that in our context, a bound on the topological width of the
manifold implies that the manifold is a union of compression bodies of
bounded genus, and there are finitely many of these up to
homeomorphism.  Their result assumes restrictions on the gluing maps,
but then shows the resulting manifold is hyperbolic, but the
bilipshitz constant $K$ depends on the width of $M$, i.e the genus of
the compression bodies. We assume that the manifold $M$ is compact and
hyperbolic, and make no restriction on the gluing maps between the
compression bodies, but we show that the linear constants relating
topological and metric complexities are independent of the genus of
the compression bodies.

\subsection{Outline}

In \cite{hoffoss-maher} we considered the linear case, in which the
range of the Morse function $f \colon M \to \R$ is $\R$.  Such a Morse
function has the property that for each $t \in \R$, the pre-image
$f^{-1}(t)$ is compact and separating.  For the case in which the
range of the Morse function $f \colon M \to X$ is a graph, one may
consider the lifted Morse function
$\widetilde f \colon \widetilde M \to \widetilde X$, where
$\widetilde X$ is the universal cover of $X$, and $\widetilde M$ is
the corresponding cover of $M$.  This lifted Morse function has the
property that for each $t \in \widetilde X$, each pre-image
$\widetilde f^{-1}(x)$ is compact and separating, and so many of the
arguments from \cite{hoffoss-maher} go through directly in this case.
In particular, we construct polyhedral approximations to the level
sets of $\widetilde f$, and show that they have bounded topological
complexity, as we now describe.

A choice of Margulis constant $\mu$ determines a thick-thin
decomposition for $M$, in which the thin part is a disjoint union of
Margulis tubes.  We also choose a Voronoi decomposition determined by
a maximal $\e$-separated collection of points in $M$.  This implies
that every Voronoi cell has diameter at most $\e$, and, given $\mu$,
we may choose $\e$ small enough such that every Voronoi cell that
intersects the thick part contains an embedded ball of radius $\e/2$.
The thick-thin decomposition of $M$, and the Voronoi decomposition of
$M$, lift to thick-thin decompositions and Voronoi decompositions of
the cover $\widetilde M$.  We give the details of this construction in
Sections \ref{section:tree}, \ref{section:voronoi} and
\ref{section:thickthin}.

A separating surface $F$ in $\widetilde M$ determines a partition of
the Voronoi cells, depending on which side of the surface the majority
of the volume of the (metric) ball of radius $\e/2$ inside the Voronoi
cell lies.  We will call the boundary between these two sets of
Voronoi cells a \emph{polyhedral surface} $S$, which is a union of
faces of Voronoi cells, and we can think of this as a combinatorial
approximation to the original surface $F$.

A key observation from \cite{hoffoss-maher} is that the number of
faces of the polyhedral surface in the thick part is bounded by the
area of $F$.  This is because in the thick part of $M$, the metric
ball of radius $\e/2$ in each Voronoi cell is embedded, so moving the
ball along a geodesic connecting the centers of the two Voronoi
produces at some point a metric ball whose volume is divided exactly
in two, giving a lower bound to the area of $F$ near that point.
There are bounds on the number of vertices and edges of any Voronoi
cell in terms of $\e$, so a bound on the number of faces of $S$ in the
thick part gives a bound on the Euler characteristic of $S$.  We are
unable to control the number of faces in the thin part, so we cap off
the part of $S$ in the thick part with surfaces of bounded Euler
characteristic contained in the thin part.  This produces surfaces of
bounded genus, which we call \emph{capped surfaces}.

In this way, the lift of a Morse function
$\widetilde f \colon \widetilde M \to \widetilde X$ gives rise to a
collection of polyhedral surfaces in $\widetilde M$ of bounded genus.
These surfaces are constant except at finitely many points of
$\widetilde X$, which we call \emph{cell splitters}, where a level set
divides the ball contained in a Voronoi cell exactly in half.  We give
the details of the construction of the capped surfaces and the
properties of the cell splitters in Sections \ref{section:splitter}
and \ref{section:capped}.

The key step, in Section \ref{section:equivariant}, is to show that we
may construct these surfaces equivariantly in $\widetilde M$, so they
project down to embedded surfaces in $M$, with the same bounds on
their topological complexity.

Finally, in Section \ref{section:bounded}, by considering the local
configuration near a cell splitter, we show that the regions between
the capped surfaces may be constructed using a number of handles
bounded in terms of the area of the level sets $\widetilde f^{-1}(t)$,
and so this the bounds topological complexity of the decomposition of
$M$ given by the capped surfaces in terms of metric complexity of $M$.

The bound in the other direction is a direct consequence of the bound
from \cite{hoffoss-maher}, though we review the argument in the
Section \ref{section:sss} for the convenience of the reader.

\subsection{Acknowledgements}

The authors would like to thank Dick Canary, David Futer, David Gabai,
Joel Hass, Daniel Ketover, Sadayoshi Kojima, Yair Minksy, Yo'av Rieck
and Dylan Thurston for helpful conversations, and the Tokyo Institute
of Technology for its hospitality and support. The second author was
supported by the Simons Foundation and PSC-CUNY.  This material is
based upon work supported by the National Science Foundation under
Grant No.~DMS-1440140 while the second author was in residence at the
Mathematical Sciences Research Institute in Berkeley, California,
during the Fall 2016 semester.

\section{Gromov area bounds graph width} 
\label{section:metric bounds topology}

In this section we show that we can bound the topological complexity of
the manifold in terms of its metric complexity, i.e.  we show that
graph width is bounded in terms of Gromov area.

\begin{theorem}
There is a constant $K$, such that for any closed hyperbolic
$3$-manifold $M$,
\[ \text{graph width}(M) \le K ( \text{Gromov area}(M) ). \]
\end{theorem}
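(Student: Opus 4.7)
The plan is to take a Morse function $f\colon M\to X$ to a trivalent graph whose area is arbitrarily close to $\text{Gromov area}(M)$, and use it to manufacture a graph splitting of $M$ whose width is bounded by a constant times the area of $f$. Since $X$ is trivalent, its universal cover $\widetilde X$ is a trivalent tree, so the lifted Morse function $\widetilde f\colon \widetilde M\to \widetilde X$ has the property that every preimage $\widetilde f^{-1}(t)$ is a \emph{compact separating} surface in $\widetilde M$; this is the feature that lets us imitate the linear argument of \cite{hoffoss-maher}. First I would fix a Margulis constant $\mu$ and an $\e$-separated Voronoi decomposition of $M$ as described in Sections \ref{section:tree}, \ref{section:voronoi}, \ref{section:thickthin}, then lift everything to $\widetilde M$.

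For each $t\in\widetilde X$, I would approximate $\widetilde f^{-1}(t)$ by the polyhedral surface $S_t$ obtained by partitioning Voronoi cells according to which side of the level set contains the majority of the volume of the embedded $\tfrac{\e}{2}$-ball in the cell. The key area estimate from \cite{hoffoss-maher} carries over verbatim: moving the embedded ball along a geodesic between centers of adjacent cells forces the level set to bisect it at some point, so the number of thick-part faces of $S_t$ is controlled by $\text{area}(\widetilde f^{-1}(t))$, and hence so is the Euler characteristic of $S_t$ after the capping procedure from Section \ref{section:capped} replaces the uncontrolled thin-part portions by surfaces inside the Margulis tubes of bounded genus. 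This gives a capped surface of genus at most $K\cdot \text{area}(\widetilde f^{-1}(t))$, constant in $t$ except at the finitely many cell splitter points supplied by Section \ref{section:splitter}.

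Next I would apply the equivariance construction of Section \ref{section:equivariant} so that the family $\{S_t\}$ is invariant under the deck group and projects to an embedded family of capped surfaces in $M$ with the same genus bound. The cell splitters project to a finite set in $X$, and the regions of $M$ cut out by the capped surfaces are parametrized by the edges of the graph obtained from $X$ by subdividing at each projected splitter together with (small neighborhoods of) the trivalent vertices of $X$. I would check that each such region is a compression body: between consecutive splitters along an edge of $X$ this follows exactly as in the linear case, since one attaches a single handle at each Morse critical point; at a trivalent vertex of $X$, the trivalent singularity has the standard $x^2+y^2-z^2$ local model, whose three boundary pieces fit together to give a compression body whose top boundary is the generic capped level set just on the incoming side and whose lower boundaries match the two outgoing sides (or vice versa). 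The adjacency of these compression bodies is dictated by $X$, so the definition of graph splitting from Saito--Scharlemann--Schultens is satisfied. Its width is then the maximum of $c(H_i)$ over top boundary components, which is at most $K\cdot\max_t\text{area}(\widetilde f^{-1}(t)) = K\cdot\text{area}(f)$, and taking the infimum over $f$ yields the theorem.

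The main obstacle is the last verification: once the capped surfaces descend to $M$, it is not automatic that the complementary regions assemble into a graph splitting, because a single cell splitter in $\widetilde X$ can project to a point in $X$ at which several sheets meet, and the local picture near a trivalent vertex of $X$ must be reconciled with the local picture near the polyhedral approximations of the singular level set. The handle count near cell splitters from Section \ref{section:bounded} is what keeps the complexity of the glued compression bodies under control and has to be combined with a careful local analysis at the trivalent vertices of $X$; everything else is a branching-tree adaptation of arguments already in \cite{hoffoss-maher}.
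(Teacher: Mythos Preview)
Your outline tracks the paper's strategy closely: lift to the cover where $\widetilde X$ is a tree so that level sets separate, build polyhedral and capped surfaces from an $\e$-regular Voronoi decomposition, make the family equivariant via Section~\ref{section:equivariant}, and bound the complexity of the complementary pieces via Section~\ref{section:bounded}. One preliminary step you omit is the reduction to connected level sets by passing to the leaf space; the paper uses this so that the trivalent vertices of $X$ are precisely the critical points where a connected level set splits in two.

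There is, however, a genuine confusion in the middle of your argument. You propose to check that each complementary region is a compression body ``since one attaches a single handle at each Morse critical point,'' and that near a trivalent vertex of $X$ the $x^2+y^2-z^2$ local model produces a compression body. But the capped surfaces $S^+_t$ are \emph{not} level sets of $h$: they are constant except at cell splitters, which are determined by the Voronoi combinatorics and are in general unrelated to the Morse critical points or to the vertices of $X$. At a cell splitter an entire Voronoi cell $V$ (with up to $J$ faces) switches sides, and after capping in the thin part the resulting region $P$ is in general \emph{not} a compression body at all; what Proposition~\ref{bounded_handles} actually proves is only that $P$ admits a handle decomposition with at most $K\,\text{Gromov area}(M)$ handles. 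The graph splitting is then obtained by Heegaard-splitting each such $P$ relative to its boundary, and the width bound follows from this handle bound together with the genus bound on $\partial P$ from Proposition~\ref{prop:capped}, not from any Morse-theoretic local model. Your final paragraph hints that Section~\ref{section:bounded} is where the real work lies, but as written your plan would stall at the step ``check that each such region is a compression body,'' because that statement is false.
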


Let $f \colon M \to X$ be a Morse function onto a graph $X$, such that
the Gromov area of $f$ is arbitrarily close to the Gromov area of $M$.
Any metric graph is arbitrarily close to a trivalent metric graph, so
we may assume the graph is trivalent.  We now show that we may assume
the level sets of $f$ are connected.

\begin{proposition}
Let $M$ be a Riemannian manifold, and let $f \colon M \to X$ be a
Morse function onto a trivalent graph $X$. Then there is a trivalent
graph $X'$, and a Morse function $f' \colon M \to X'$ with connected
level sets, with $\text{Gromov area}(f') \le \text{Gromov area}(f)$.
\end{proposition}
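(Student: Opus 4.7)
The plan is to construct $X'$ as the component Reeb graph of $f$. Define $X' := M/{\sim}$, where $x \sim y$ iff $f(x) = f(y)$ and $x, y$ lie in the same connected component of $f^{-1}(f(x))$, and let $f' : M \to X'$ be the quotient map. By construction, the fibers of $f'$ are precisely the connected components of the fibers of $f$, so the level sets of $f'$ are automatically connected. The map $f$ factors as $f = p \circ f'$ for a continuous $p : X' \to X$, and the area comparison is immediate: for every $c \in X'$,
\[
  \area((f')^{-1}(c)) \le \area(f^{-1}(p(c))),
\]
so taking suprema gives $\text{Gromov area}(f') \le \text{Gromov area}(f)$, provided we can equip $X'$ with the structure of a trivalent graph on which $f'$ is Morse.

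The main work is therefore a local analysis of $X'$ above each point of $X$. Above a regular point of $f$ on the interior of an edge, the fibers are disjoint unions of closed surfaces and $p$ is a local homeomorphism, so $X'$ is locally a $1$-manifold there. Above a Morse index $0$ or $3$ critical value, a sphere component is born or dies, contributing a degree-$1$ vertex of $X'$. Above a Morse index $1$ or $2$ saddle, either the saddle lies in a single global component of its fiber (no vertex in $X'$) or it merges two components into one, producing a degree-$3$ vertex. Above a trivalent vertex $v$ of $X$, each trivalent singularity of $f$ in $f^{-1}(v)$ contributes three local half-edges to the component containing it; if that component contains exactly one such singularity the result is a degree-$3$ vertex of $X'$, while if it contains several, one obtains a vertex of degree a positive multiple of $3$. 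Any such higher-degree vertex can be split into trivalent vertices by an arbitrarily small perturbation of $f$, namely by moving the corresponding singular values slightly apart in the target graph (subdividing the appropriate edges) so that distinct trivalent singularities lying in the same component of their fiber map to distinct vertices of $X'$.

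The step I expect to be the main technical obstacle is the global topological verification that $X'$ is Hausdorff and locally homeomorphic to the one-complex described above, i.e.\ that the equivalence relation $\sim$ is closed in $M \times M$ and that the quotient topology agrees with the asserted graph structure at each special point. This relies on compactness of $M$ and standard properties of Morse maps, but needs some care near the trivalent singularities, where the three local chambers coalesce at a single singular leaf. Once $X'$ is confirmed to be a finite trivalent graph and $f'$ is confirmed to be Morse on it, the inequality on Gromov areas follows directly from the fiber inclusion above, and the perturbation used to split higher-valence vertices can be taken to change areas by an amount that disappears in the infimum defining Gromov area.
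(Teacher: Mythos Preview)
Your approach is essentially the same as the paper's: both take $X'$ to be the leaf space of the level set foliation (equivalently, the component Reeb graph), let $f'$ be the quotient map, and observe that each fiber of $f'$ is a connected component of a fiber of $f$, so the area bound is immediate. The paper dispatches the structural claims in two sentences---compactness of leaves gives Hausdorffness, and the leaf space is asserted to be a trivalent graph---whereas you spell out the local case analysis (regular values, birth/death, saddles that do or do not disconnect, and trivalent vertices of $X$) and flag the possibility of higher-valence vertices requiring a perturbation. Your extra care is not wasted: the paper's one-line assertion that vertices of $X'$ correspond to ``vertex singularities'' is actually incomplete, since saddle critical points in the interior of an edge of $X$ can also produce degree-$3$ vertices in $X'$ when they disconnect a component (the paper tacitly acknowledges this in the sentence following the proof). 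Your worry about several trivalent singularities sharing a fiber component is unnecessary under the paper's definition, which places a single such singularity above each vertex of $X$, but the perturbation you describe does no harm.
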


\begin{proof}
The level sets of the function $f$ give a singular foliation of $M$
with compact leaves, which we shall call the \emph{level set
  foliation}, and the leaves of this foliation are precisely the
connected components of the pre-images of points in $M$.  Consider the
leaf space $L$ of the level set foliation, i.e. the space obtained
from $M$ by identifying points in the same leaf. As all leaves are
compact, the leaf space is Hausdorff. The leaf space is a trivalent
graph, with vertices corresponding to vertex singularities, and the
maximum area of the pre-images of the quotient map is less than or
equal to the maximum area of the pre-images of $f$.  Therefore, we may
choose $f'$ to be the leaf space quotient map $f' \colon M \to L$,
which is a Morse function onto a trivalent graph, and has connected
level sets, with the property that the area of the level sets of $f'$
is bounded by the area of the level sets of $f$.
\end{proof}

In particular, this means that the vertices of $X$ are precisely the
critical points of the Morse function $f$ in which a connected level
set splits into two connected components.

\subsection{Morse functions to trees} 
\label{section:tree}

We would like to work in the cover $\widetilde M$ of $M$ corresponding
to the universal cover $\widetilde X$ of the graph $X$, which will
have the key advantage that all pre-image surfaces are separating in
$\widetilde M$.  In fact, the induced map on fundamental groups $f_*
\colon \pi_1 M \to \pi_1 X$ is surjective, but as we do not use this
property, we omit the proof.

Let $p \colon \twid{M} \rightarrow M$ be the cover of $M$
corresponding to the kernel of the induced map $f_* \colon \pi_1 M
\rightarrow \pi_1 X $, and let $c:\twid{X}\rightarrow X$ be the
universal cover of $X$, so $\twid{X}$ is a tree. Then the map $f \circ
p \colon \twid{M} \rightarrow X$ lifts to a map $h = \twid{f \circ p}
: \twid{M} \rightarrow \twid{X}$.  Since each leaf $F_t$ in $M$ maps
to a single point in $X$, the fundamental group of each leaf is
contained in $\ker(f)$.  Therefore, each leaf in $M$ lifts to a leaf
in $\twid{M}$, and as the cover is regular, the pre-image of a point
$t \in \Xt$ is a disjoint union of homeomorphic copies of
$F_{c(t)}$. In particular, the area bound for the leaves $F_t$ in
$M$ is also an area bound for the leaves $H_t = h^{-1}(t)$ in
$\twid{M}$.

\begin{center}
\begin{tikzpicture}[node distance=2cm, auto]
  \node (A) {$\twid{M}$};
  \node (B) [right of=A] {$\twid{X}$};
  \node (C) [below of=A] {$M$};
  \node (D) [below of=B] {$X$};
  \draw[->] (A) to node {$h = \twid{f \circ p}$} (B);
  \draw[->] (A) to node {$p$} (C);
  \draw[->] (B) to node {$c$} (D);
  \draw[->] (C) to node {$f$} (D);
\end{tikzpicture}
\end{center}

As $\Xt$ is a tree, every point is separating, and so every pre-image
surface $H_t = h^{-1}(t)$ is also separating.

\subsection{Voronoi cells} \label{section:voronoi}

We will approximate the level sets of $f$ by surfaces consisting of
faces of Voronoi cells.  We now describe in detail the Voronoi cell
decompositions we shall use, and their properties. The definitions in
this section are taken verbatim from \cite{hoffoss-maher}, but we
include them in this section for the convenience of the reader.

A \emph{polygon} in $\H^3$ is a bounded subset of a hyperbolic plane
whose boundary consists of a finite number of geodesic segments. A
\emph{polyhedron} in $\H^3$ is a convex topological $3$-ball in $\H^3$
whose boundary consists of a finite collection of polygons. A
\emph{polyhedral cell decomposition} of $\H^3$ is a cell decomposition
in which which every $3$-cell is a polyhedron, each $2$-cell is a
polygon, and the edges are all geodesic segments. We say a cell
decomposition of a complete hyperbolic manifold $M$ is
\emph{polyhedral} if its pre-image in the universal cover $\H^3$ is
polyhedral.

Let $X = \{ x_i \}$ be a discrete collection of points in
$3$-dimensional hyperbolic space $\H^3$. The Voronoi cell $V_i$
determined by $x_i \in X$ consists of all points of $M$ which are
closer to $x_i$ than any other $x_j \in X$, i.e.
\[ V_i = \{ x \in \H^3 \mid d(x, x_i) \le d(x, x_j) \text{ for all }
x_j \in \Xt \}. \]
We shall call $x_i$ the \emph{center} of the Voronoi cell $V_i$, and
we shall write $\V = \{ V_i \}$ for the collection of Voronoi cells
determined by $X$. Voronoi cells are convex sets in $\H^3$, and hence
topological balls.  The set of points equidistant from both $x_i$ and
$x_j$ is a totally geodesic hyperbolic plane in $\H^3$.  A \emph{face}
$\Phi$ of the Voronoi decomposition consists of all points which lie
in two distinct Voronoi cells $V_i$ and $V_j$, so $\Phi$ is contained
in a geodesic plane. An \emph{edge} $e$ of the Voronoi decomposition
consists of all points which lie in three distinct Voronoi cells
$V_i, V_j$ and $V_k$, which is a geodesic segment, and a \emph{vertex}
$v$ is a point lying in four distinct Voronoi cells $V_i, V_j, V_k$
and $V_l$.  By general position, we may assume that all edges of the
Voronoi decomposition are contained in exactly three distinct faces,
the collection of vertices is a discrete set, and there are no points
which lie in more than four distinct Voronoi cells. We shall call such
a Voronoi decomposition a \emph{regular} Voronoi decomposition, and it
is a polyhedral decomposition of $\H^3$.  As each edge is $3$-valent,
and each vertex is $4$-valent, this implies that the dual cell
structure is a simplicial triangulation of $\H^3$, which we shall
refer to as the \emph{dual triangulation}. The dual triangulation may
be realised in $\H^3$ by choosing the vertices to be the centers $x_i$
of the Voronoi cells and the edges to be geodesic segments connecting
the vertices, and we shall always assume that we have done this. In
this case the triangles and tetrahedra are geodesic triangles and
tetrahedra in $\H^3$.

Given a collection of points $X = \{ x_i \}$ in a hyperbolic
$3$-manifold $M$, let $\Xt$ be the pre-image of $X$ in the universal
cover of $M$, which is isometric to $\H^3$. As $\Xt$ is equivariant,
the corresponding Voronoi cell decomposition $\V$ of $\H^3$ is also
equivariant. The distance condition implies that the interior of each
Voronoi cell $V$ is mapped down homeomorphically by the covering
projection, though the covering projection may identify faces, edges
or vertices of $V_i$ under projection into $M$.  By abuse of notation,
we shall refer to the resulting polyhedral decomposition of $M$ as the
Voronoi decomposition $\V$ of $M$.  By general position, we may assume
that $\V$ is regular.  The dual triangulation is also equivariant, and
projects down to a triangulation of $M$, which we will also refer to
as the dual triangulation, though this triangulation may no longer be
simplicial.

We shall write $B(x, r)$ for the closed metric ball of radius $r$
about $x$ in $M$, i.e.
\[  B(x, r) = \{ y \in M \mid d(x, y) \le r \}.   \]
A metric ball in $M$ need not be a topological ball in general.  We
shall write $\inj_M(x)$ for the injectivity radius of $M$ at $x$,
i.e. the radius of the largest embedded ball in $M$ centered at $x$.
Then the injectivity radius of $M$, denoted $\inj(M)$, is defined to
be
\[ \inj(M) = \inf_{x \in M} \inj_M(x). \]

We say a collection $\{ x_i \}$ of points in $M$ is
\emph{$\epsilon$-separated} if the distance between any pair of points
is at least $\epsilon$, i.e. $d(x_i, x_j) \ge \epsilon$, for all $i
\not = j$. Let $\{ x_i \}$ be a maximal collection of
$\epsilon$-separated points in $M$, and let $\V$ be the corresponding
Voronoi cell division of $M$.  Since the collection $\{ x_i \}$ is
maximal, each Voronoi cell is contained in a metric ball of radius
$\e$ about its center. Furthermore, if the injectivity radius at the
center $x_i$ is at least $2\e$, then as the points $x_i$ are distance
at least $\e$ apart, each Voronoi cell contains a topological ball of
radius $\e/2$ about its center, i.e.
\[ B(x_i, \epsilon/2 )  \subset V_i \subset B(x_i, \epsilon).     \]

\begin{definition}
Let $M$ be a complete hyperbolic $3$-manifold.  We say a Voronoi
decomposition $\V$ is $\e$-regular, if it is regular, and it arises
from a maximal collection of $\e$-separated points.
\end{definition}

A \emph{simple arc} in the boundary of a tetrahedron is a properly
embedded arc in a face of the tetrahedron with endpoints in distinct
edges. A \emph{triangle} in a tetrahedron is a properly embedded disc
whose boundary is a union of three simple arcs, and a
\emph{quadrilateral} is a properly embedded disc whose boundary is the
union of four simple arcs. A \emph{normal surface} in a triangulated
$3$-manifold is a surface that intersects each tetrahedron in a union
of normal triangles and quadrilaterals.

One useful property of $\e$-regular Voronoi decompositions is that the
boundary of any union of Voronoi cells is an embedded surface, in fact
an embedded normal surface in the dual triangulation.

\begin{proposition} \cite{hoffoss-maher}*{Proposition 2.2} %
Let $M$ be a complete hyperbolic manifold without cusps, and let $\V$
be an $\e$-regular Voronoi decomposition. Let $P$ be a union of
Voronoi cells in $\V$, and let $S$ be the boundary of $P$. Then $S$ is
an embedded surface in $M$.
\end{proposition}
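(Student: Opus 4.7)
The plan is to check embeddedness first in the universal cover $\H^3$, where by $\e$-regularity exactly three Voronoi cells of $\Vt$ meet along each edge and exactly four meet at each vertex. Let $\widetilde P$ denote the pre-image in $\H^3$ of $P$, and let $\widetilde S = \partial \widetilde P$. I would show that $\widetilde S$ is an embedded topological $2$-manifold by examining the local picture of $\widetilde S$ at each stratum of $\Vt$, and then use equivariance together with the fact that each cell of $\Vt$ projects injectively into $M$ to conclude that $S$ is embedded.

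At the interior of a face $\Phi$, the face lies in $\widetilde S$ precisely when the two adjacent cells of $\Vt$ lie on opposite sides of $\widetilde P$, and in that case $\Phi$ is manifestly a local disk. At the interior of an edge $e$ shared by three cells $V_i,V_j,V_k$, consider the $2$-coloring of these cells by membership in $\widetilde P$. If monochromatic, no face at $e$ lies in $\widetilde S$; otherwise the coloring splits $2$-$1$, exactly two of the three faces incident to $e$ lie in $\widetilde S$, and these meet along $e$ to form a local disk transverse to $e$. The vertex case is the crux. At a vertex $v$ of $\Vt$, four cells $V_i,V_j,V_k,V_l$ meet, and every pair among them shares a face through $v$. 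The link of $v$ in $\Vt$ is a $2$-sphere combinatorially dual to a tetrahedron: four spherical regions, one per cell, six arcs, one per incident face, and four trivalent points, one per incident edge. The trace of $\widetilde S$ on this link sphere is the set of arcs separating cells in $\widetilde P$ from cells not in $\widetilde P$. Because every pair of regions is adjacent in the link, every $2$-coloring of the four regions produces a boundary that is a single simple closed curve on the sphere (the non-trivial cases being, up to swapping colors, a $1$-$3$ split, yielding a triangle, and a $2$-$2$ split, yielding a $4$-cycle). Hence $\widetilde S$ is locally a disk at $v$, and $\widetilde S$ is an embedded surface in $\H^3$.

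To descend to $M$, observe that because the deck transformations of $\H^3 \to M$ act freely by isometries, no nontrivial deck transformation can stabilize a Voronoi cell, and therefore each cell of $\Vt$ together with all of its boundary strata projects injectively into $M$. Combined with the equivariance of $\widetilde S$, this implies that a small neighborhood in $M$ of any point of $S$ is homeomorphic to a small neighborhood in $\H^3$ of any of its lifts in $\widetilde S$, and so $S$ is an embedded surface. The main obstacle is the combinatorial vertex case: one must rule out singular configurations on the link sphere such as a $\Theta$-graph, and this is exactly where the completeness of the adjacency graph on the four cells at a vertex (inherited from the dual tetrahedron, and thus from $\e$-regularity) is essential.
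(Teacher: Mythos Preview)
The paper does not prove this proposition here; it is quoted from \cite{hoffoss-maher}*{Proposition 2.2}, with the remark that the argument given there for compact $M$ extends to complete $M$ without cusps. Your local analysis in $\H^3$---checking faces, edges, and the link of a vertex via a $2$-coloring of the four regions of the tetrahedral link sphere---is the natural argument, and in fact recovers the observation (used later in the paper) that $S$ is normal in the dual triangulation, the $1$--$3$ and $2$--$2$ splits yielding normal triangles and quadrilaterals respectively.

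There is, however, an error in your descent step. You assert that because no nontrivial deck transformation stabilizes a Voronoi cell, each cell of $\Vt$ together with all of its boundary strata projects injectively into $M$. This inference fails: nothing prevents a deck transformation $g$ from carrying a cell $V$ to an \emph{adjacent} cell $g(V)$, and then the two faces $V\cap g(V)$ and $V\cap g^{-1}(V)$ of $V$ are identified under the covering map. The paper explicitly warns of this in Section~\ref{section:voronoi}: ``the covering projection may identify faces, edges or vertices of $V_i$ under projection into $M$.'' Such identifications can occur for cells whose centers have small injectivity radius, which the hypotheses do not exclude.

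Fortunately the false claim is unnecessary. Once $\widetilde S$ is an embedded surface in $\H^3$ and equivariant (it is the boundary of the equivariant set $\widetilde P = p^{-1}(P)$), the conclusion follows directly from the covering projection being a local homeomorphism: for $x\in S$ choose any evenly covered neighborhood $U$, and the sheet over $U$ carries $(U,\,S\cap U)$ homeomorphically onto a neighborhood of a lift in $(\H^3,\,\widetilde S)$. Drop the injective-projection claim and invoke equivariance alone; the rest of your argument is sound.
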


In \cite{hoffoss-maher} this result is stated for compact hyperbolic
$3$-manifolds, but the proof works for complete hyperbolic
$3$-manifolds without cusps.

We shall say a Voronoi cell $V_i$ with center $x_i$ is an
\emph{$\e$-deep} Voronoi cell if the injectivity radius at $x_i$ is at
least $4\e$, i.e. $\inj_M(x_i) \ge 4\e$, and in particular this
implies that the metric ball $B(x_i, 3\e)$ is a topological ball. We
shall also call centers, faces, edges and vertices of $\e$-deep
Voronoi cells $\e$-deep.
In the next section we will choose a fixed $\e$ independent of the
manifold $M$, and we will just say \emph{deep} instead of
$\e$-deep. We shall write $\W$ for the subset of $\V$ consisting of
deep Voronoi cells. If $\e < \tfrac{1}{4}\inj(M)$, then $\V = \W$ and
all Voronoi cells are deep.

Finally, we recall that there are bounds, which only depend on
$\epsilon$, on the number of vertices, edges and faces of a deep
Voronoi cell.

\begin{proposition} \cite{hoffoss-maher}*{Proposition 2.3}\label{prop:bound} %
Let $M$ be a complete hyperbolic $3$-manifold with an $\e$-regular
Voronoi decomposition $\V$, and let $\W$ be the collection of deep
Voronoi cells.  Then there is a number $J$, which only depends on
$\e$, such that each deep Voronoi cell $W_i \in \W$ has at most $J$
faces, edges and vertices.
\end{proposition}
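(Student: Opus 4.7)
The plan is to bound the number of faces of a deep Voronoi cell $V_i$ by a volume-packing argument in $\H^3$, and then bound the number of edges and vertices combinatorially via Euler's formula. First, I would observe that any face-neighbor $V_j$ of $V_i$ satisfies $d(x_i, x_j) \le 2\e$. Indeed, the common face is contained in $V_i \subset B(x_i, \e)$ and also in $V_j \subset B(x_j, \e)$, so applying the triangle inequality at any point on the face gives the distance bound. Hence the centers of all face-neighbors of $V_i$ lie in the metric ball $B(x_i, 2\e)$.

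Second, because $V_i$ is deep, $\inj_M(x_i) \ge 4\e$, so $B(x_i, 5\e/2)$ lifts isometrically to a metric ball in $\H^3$ centered at a lift of $x_i$. The centers of $V_i$ and its face-neighbors form an $\e$-separated subset of $B(x_i, 2\e)$, so the balls of radius $\e/2$ around these centers are pairwise disjoint and all contained in the embedded ball $B(x_i, 5\e/2)$. Comparing hyperbolic volumes bounds the number of such centers, and hence the number of faces of $V_i$, by the ratio $\vol(B(\cdot, 5\e/2))/\vol(B(\cdot, \e/2))$ in $\H^3$, a quantity $F_0$ depending only on $\e$.

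Finally, each Voronoi cell is a convex polyhedron, so its boundary is a $2$-sphere with $V - E + F = 2$. Since each face has at least three edges and each edge of $V_i$ lies on exactly two of its faces, $2E \ge 3F$; since each vertex of $V_i$ is incident to at least three of its edges, $2E \ge 3V$. Substituting these into Euler's formula yields $E \le 3F - 6$ and $V \le 2F - 4$, so the face bound $F \le F_0$ immediately gives bounds on $E$ and $V$ as well, and we may take $J$ to be the maximum of the three. The only delicate point is ensuring that the ball used for packing, $B(x_i, 5\e/2)$, genuinely embeds; the $4\e$ margin in the definition of a deep cell provides comfortable room, and this is the sole geometric input to the argument.
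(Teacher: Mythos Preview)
Your argument is correct and is the standard packing argument; the present paper does not give its own proof of this proposition but simply cites \cite{hoffoss-maher}*{Proposition 2.3}, where the same volume-comparison idea is used. One very small remark: for the Euler-characteristic step you only actually need the vertex inequality $2E \ge 3V$ (which holds with equality here since the decomposition is regular and each vertex of $\partial W_i$ is trivalent); combined with $V - E + F = 2$ this already gives $E = 3F - 6$ and $V = 2F - 4$, so the face inequality $2E \ge 3F$ is superfluous.
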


Again, in \cite{hoffoss-maher}, these results are stated for compact
hyperbolic $3$-manifolds, but the proofs work for complete hyperbolic
$3$-manifolds without cusps.

\subsection{Margulis tubes} \label{section:thickthin}

We will use the Margulis Lemma and the \emph{thick-thin} decomposition
for finite volume hyperbolic $3$-manifolds, and we now review these
results.

Given a number $\mu > 0$, let $X_\mu = M_{[\mu, \infty)}$ be the
\emph{thick part} of $M$, i.e. the union of all points $x$ of $M$ with
$\inj_M(x) \ge \mu$.  We shall refer to the closure of the complement
of the thick part as the \emph{thin part} and denote it by $T_\mu =
\overline{M \setminus X}$.

The Margulis Lemma states that there is a constant $\mu_0 > 0$, such
that for any compact hyperbolic $3$-manifold, the thin part is a
disjoint union of solid tori, called \emph{Margulis tubes}, and each
of these solid tori is a regular metric neighborhood of an embedded
closed geodesic of length less than $\mu_0$.  In the case in which $M$
is complete without cusps, there is an extra possibility, as a
component of the thin part may also be the universal cover of such a
solid torus, and we shall refer to such a component as an
\emph{infinite Margulis tube}.  We shall call a number $\mu_0$ for
which this result holds a \emph{Margulis constant} for
$\mathbb{H}^3$. If $\mu_0$ is a Margulis constant for $\mathbb{H}^3$,
then so is $\mu$ for any $0 < \mu < \mu_0$, and furthermore, given
$\mu$ and $\mu_0$ there is a number $\delta > 0$ such that
$N_{\delta}(T_{\mu}) \subseteq T_{\mu_0}$.  For the remainder for this
section we shall fix a pair of numbers $(\mu, \e)$ such that there are
Margulis constants $0 < \mu_1 < \mu < \mu_2$, a number $\delta$ such
that $N_{\delta}(T_{\mu}) \subseteq T_{\mu_2} \setminus T_{\mu_1}$,
and $\e = \tfrac{1}{4} \min \{ \mu_1, \delta \}$. We shall call $(\mu,
\e)$ a choice of \emph{MV}-constants for $\mathbb{H}^3$.

Let $(\mu, \e)$ be a choice of $MV$-constants, and consider an
$\e$-regular Voronoi decomposition of $M$. The fact that
$N_{\delta}(T_{\mu}) \subseteq T_{\mu_2} \setminus T_{\mu_1}$ means
that we may adjust the boundary of $T_{\mu}$ by an arbitrarily small
isotopy so that it is transverse to the Voronoi cells, and we will
assume that we have done this for the remainder of this section.  Our
choice of $\e$ implies that the thick part $X_\mu$ is contained in the
Voronoi cells in the deep part, i.e. $X_\mu \subset \bigcup_{W_i \in
  \W} W_i$, so in particular $\partial X_\mu = \partial T_\mu$ is
contained in the deep part. Furthermore, each deep Voronoi cell hits
at most one component of $T_\mu$.

\subsection{Cell splitters}\label{section:splitter}

The polyhedral surfaces we construct will be constant, except for a
discrete collection of points in $Y$, which roughly speaking
correspond to points $t \in Y$ for which the level set $f^{-1}(t)$
divide a Voronoi cell in half.  For technical reasons, we use points
which divide a ball of fixed size in the Voronoi cell in half, as we
now describe.

Let $t$ be a point in a trivalent tree $Y$. We shall write $Y_t^{c_i}$
for the closures of the connected components of $Y \setminus t$, and
we shall call these the \emph{complements} of $t$. If $t$ lies in the
interior of an edge, then there are precisely two complements, while
if $t$ is a vertex, there are precisely three complements.

Let $M$ be a complete hyperbolic $3$-manifold without cusps, and let
$h \colon M \rightarrow Y$ be a Morse function onto a trivalent tree
$Y$.  Given $t \in Y$, let $H_t^{c_i} = h^{-1}(Y_t^{c_i})$, and we
shall refer to these as the \emph{complements} of $H_t$ in $M$. As
before, there are either two or three complementary regions depending
on whether $t$ lies in the interior of an edge, or is a vertex in $Y$.

\begin{definition} \label{definition:splitter} %
Let $M$ be a complete hyperbolic $3$-manifold without cusps, with an
$\e$-regular Voronoi decomposition $\V$.  Let $h \colon M \to Y$ be a
Morse function to a tree $Y$, and let $V$ be a Voronoi cell with
center $x$. Suppose that a point $t \in Y$ has the property that for
each complementary region $H_t^{c_i}$, the volume of
$H_t^{c_i} \cap B(x, \epsilon/2) \cap V$ is at most half the volume of
the topological ball $B(x, \epsilon/2) \cap V$.  Then we say that $t$
is a \emph{cell splitter} for the Voronoi cell $V$.
\end{definition}

\begin{proposition}
Let $M$ be a complete hyperbolic $3$-manifold without cusps, with an
$\e$-regular Voronoi decomposition $\V$.  Let $h \colon M \to Y$ be a
Morse function to a tree, and let $V$ be a Voronoi cell with center
$x$.  Then there is a unique cell splitter $t \in Y$ for $V$.
\end{proposition}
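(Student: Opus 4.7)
The plan is to recast the statement in measure-theoretic terms on the tree. Set $U := B(x,\e/2)\cap V$ and $V_0 := \vol(U) > 0$, and define the finite Borel measure $\nu$ on $Y$ by $\nu(A) := \vol(h^{-1}(A)\cap U)$, so that $\nu(Y) = V_0$. Each level set $h^{-1}(t)$ is a $2$-dimensional surface, with at worst a trivalent singularity when $t$ is a vertex of $Y$, and so has three-dimensional volume zero; hence $\nu$ is atomless. Consequently, for any $t \in Y$ the complements $Y_t^{c_i}$ partition $Y$ up to a $\nu$-null set and their $\nu$-masses sum to $V_0$, so a cell splitter for $V$ is precisely a point at which every complement has $\nu$-mass at most $V_0/2$, i.e.\ a median of $\nu$ on the tree.

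For existence, at any non-splitter $t$ the complement masses sum to $V_0$ and so exactly one complement strictly exceeds $V_0/2$; call the edge into it the \emph{heavy direction}. Starting at $h(x) \in h(U)$, whenever the current point fails to be a splitter, walk along its heavy direction. On each traversed edge the mass of the backward complement is monotonically non-decreasing, so either it crosses $V_0/2$ in the interior of the edge (producing a splitter) or we arrive at the far vertex with the backward complement's mass still at most $V_0/2$. At that new vertex the edge just traversed is therefore not heavy, so the next heavy direction lies along one of the two other edges, and the walk never re-enters an edge. Because $h(U)$ is compact and $Y$ is locally finite, $h(U)$ is contained in a finite subtree $Y_0$; and outside $Y_0$ the measure $\nu$ vanishes, so before the walk could leave $Y_0$ the forward complement's mass would drop to $0$, forcing the backward mass to cross $V_0/2$ and producing a splitter. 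The walk thus stays in $Y_0$ and, visiting only finitely many edges, must terminate at a splitter.

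For uniqueness, suppose $s \ne t$ are distinct splitters. Let $A$ be the complement of $s$ containing $t$, and $C$ the complement of $t$ containing $s$. A direct check in the tree shows $A \cup C = Y$ and $[s,t] \subset A \cap C$, so by inclusion-exclusion
\[
V_0 \;=\; \nu(A) + \nu(C) - \nu(A \cap C) \;\le\; V_0 - \nu(A\cap C),
\]
whence $\nu(A\cap C) = 0$ and in particular $\nu([s,t]) = 0$. On the other hand, any splitter must lie in $h(U)$ (otherwise one complement would carry all the mass $V_0$), so both $s$ and $t$ lie in $h(U)$; since $h(U)$ is the continuous image of the connected set $U$, it contains the entire arc $[s,t]$. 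Pick a regular value $r$ in the interior of $[s,t]$, which exists because the critical values of the Morse function $h$ are discrete. The submersion property at $r$ gives a small open interval $N \subset [s,t]$ containing $r$ with $h^{-1}(N) \cap U$ open and non-empty in $U$; this set has positive three-volume, so $\nu(N) > 0$, contradicting $\nu([s,t]) = 0$.

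The main obstacle is making the tree walk terminate: the essential ingredients are monotonicity of the backward-complement mass along each edge (which forbids re-entry), local finiteness of $Y$, and compactness of $h(U)$, which together confine the walk to a finite subtree and force it to produce a splitter in finitely many steps.
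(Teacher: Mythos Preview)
Your proof is correct. The existence argument is essentially the paper's: both follow the ``heavy'' complement (the one carrying more than half the mass) until a splitter appears, with the paper phrasing this as an induction on the number of vertices in $h(B)$ and you phrasing it as a non-backtracking walk confined to a finite subtree. These are the same idea with different bookkeeping.

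The uniqueness arguments genuinely differ. The paper splits into cases according to whether the splitter $t$ is a vertex or lies in the interior of an edge, and in each case exhibits directly, for any other point $t'$, a complement containing at least one (resp.\ two) of the complements of $t$ and hence carrying too much mass. You instead recast a splitter as a median of the pushforward measure $\nu$ and run an inclusion--exclusion: if $s\neq t$ were both splitters, then the complement $A$ of $s$ toward $t$ and the complement $C$ of $t$ toward $s$ cover $Y$, each with $\nu$-mass at most $V_0/2$, forcing $\nu([s,t])\le\nu(A\cap C)=0$; but $[s,t]\subset h(U)$ and any regular value on it has a neighbourhood of positive $\nu$-mass, a contradiction. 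Your argument is case-free and conceptually clean (it is exactly the standard uniqueness proof for medians of measures on trees); the paper's is more hands-on and slightly shorter. One small point worth making explicit: the step ``$h^{-1}(N)\cap U$ non-empty and relatively open in $U$, hence positive volume'' uses that $U=\overline{\mathrm{int}\,U}$, so that any open neighbourhood in $M$ of a point of $U$ meets $\mathrm{int}\,U$. This holds because $U=B(x,\e/2)\cap V$ is (a quotient of) a convex body with $x$ in its interior, but it is doing real work and deserves a sentence.
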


\begin{proof}
We first show existence.  Let $B$ be the topological ball $B(x, \e/2)
\cap V$, and let $v$ be the volume of this ball. Consider $h(B)
\subset Y$.  If there is a vertex of $Y$ which is a cell splitter,
then we are done.  Otherwise, suppose no vertex of $h(B)$ is a cell
splitter. If $t$ is a vertex in $h(B)$ which is not a cell splitter,
then there is at least one complementary region $Y_t^{c_i}$ such that
$H_t^{c_i} \cap B(x, \e/2) \cap V$ has volume more than
$\tfrac{1}{2}v$, and $Y_t^{c_i} \cap h(B)$ has at least one fewer
vertex. So proceeding by induction, we may reduce to the case in which
$h(B)$ contains an interval $I$ with no vertices such that $h^{-1}(I)
\cap B(x, \e/2) \cap V$ has volume at least $\tfrac{1}{2} v$.  In this
case, let $t_0$ and $t_1$ be the endpoints of $I$, and consider
$h^{-1}([t_0, s])$, for $s \in I$. When $s = t_0$, this has volume
less than $\tfrac{1}{2} v$, and has volume greater than
$\tfrac{1}{2}v$ when $s = t_1$. As the volume changes continuously
with $s$, there is a point $t'$ such that $H_{t'}$ divides $B$ into
two regions, each of which has volume exactly $\tfrac{1}{2}v$, so $t'$
is a cell splitter for $V$.

We now show uniqueness.  First suppose $t$ is a cell splitter which is
not a vertex. Then there are precisely two complementary regions
$H_t^{c_1}$ and $H_t^{c_2}$, each of which must have exactly half the
volume of $B(x, \e/2) \cap V$, and we shall denote this volume by
$v$. Any other point $t'$ has a complementary region which contains at
least one of these complements, and so has volume greater than
$\tfrac{1}{2} v$, and so can not be a cell splitter.

Finally suppose $t$ is a cell splitter which is a vertex. Then there
are three complements $H_t^{c_1}, H_t^{c_2}$ and $H_t^{c_3}$, each of
which has volume at most $\tfrac{1}{2}v$. As each region has volume at
most $\tfrac{1}{2}v$, any two regions must have total volume at least
$\tfrac{1}{2}v$. Any other point $t' \in Y$ must have a complementary
region which contains at least two of the complements of $H_t$, and so
has a complement with volume strictly greater than $\tfrac{1}{2}v$,
and so can not be a cell splitter.
\end{proof}

\begin{definition} \label{definition:generic} %
We say that a Morse function $f \colon M \to Y$ to a tree $Y$ is
\emph{generic} with respect to a Voronoi decomposition $\mathcal{V}$
if the cell splitters for distinct Voronoi cells $V_i$ correspond to
distinct points $t_i \in Y$. We say a point $t \in Y$ is
\emph{generic} if it is not a critical point for the Morse function,
and is not a cell splitter.
\end{definition}

We may assume that $f$ is generic by an arbitrarily small perturbation
of $f$, and we shall always assume that $f$ is generic from now on.
Finally, we remark that a trivalent vertex in $Y$ is not necessarily a
cell splitter.

\subsection{Polyhedral and capped surfaces} \label{section:capped}

Let $Q$ be a $3$-dimensional submanifold of a complete hyperbolic
$3$-manifold $M$ without cusps, with boundary an embedded separating
surface $F$. In this section we show how to approximate $Q$ by a union
of Voronoi cells, which in turn gives an approximation to $F$ by an
embedded surface $S$ which is a union of faces of Voronoi cells.

We say a region $R$ is \emph{generic} if for every Voronoi cell $V_i$
with center $x_i$, the region consisting of the intersection of
$B(x_i, \e/2)$ with the interior of $V_i$ does not have exactly half
its volume lying in $R$. We say a separating surface $F$ in $M$ is
\emph{generic} if it bounds a generic region.

Let $P$ be the collection of Voronoi cells for which at least half of
the volume of $B(x_i, \e/2) \cap \text{interior}(V_i)$ lies in $Q$. We
say the $P$ is the \emph{polyhedral region} determined by $Q$. The
polyhedral region $P$ may be empty, even if $Q$ is non-empty. The
boundary of $Q$ is a polyhedral surface $S$, which we shall call the
\emph{polyhedral surface} associated to $F = \partial Q$, and is a
normal surface in the dual triangulation.  We will use the following
bound on the number of faces and boundary components of the
intersection of the polyhedral surface $S$ with the thick part of the
manifold, in terms of the area of the corresponding surface $F$.  If
$S$ is a surface, we will write $\norm{\partial S}$ for the number of
boundary components of $S$, and if $S'$ is a subset of a polyhedral
surface $S$, we will write $\| S' \|$ for the number of faces of $S$
which intersect $S'$.

\begin{proposition} 
\cite{hoffoss-maher}*{Proposition 2.10, 2.13} \label{cor:bound} %
Let $(\mu, \e)$ be $MV$-constants, and let $M$ be a complete
hyperbolic $3$-manifold without cusps, with an $\e$-regular Voronoi
decomposition $\V$ with deep part $\W$ and thick part $X_\mu$. Then
there is a constant $K$, which only depends on the $MV$-constants,
such that for any generic embedded separating surface $F$ in $M$, the
corresponding polyhedral surface $S$ satisfies:
\[ \| S \cap X_\mu \| \le K \text{area}(F), \]
and
\[ \norm{\partial ( S \cap X_\mu ) } \le K \text{area}(F). \]
\end{proposition}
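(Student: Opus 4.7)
The plan is to prove both inequalities by localizing to ``witnesses'' of area on $F$. The key mechanism is the bisecting ball argument already previewed in the introduction: every face of $S$ in the thick part is charged to a ball containing a definite amount of area of $F$, and then a bounded-multiplicity argument gives the first inequality. The boundary bound then follows by counting intersection arcs of faces of $S$ with $\partial X_\mu$.

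For the first bound, let $\Phi$ be a face of $S$ meeting $X_\mu$, and let $V_i, V_j$ be the two Voronoi cells sharing $\Phi$, with centers $x_i, x_j$. By the defining property of the polyhedral region $P$, one center, say $x_i$, has more than half of $B(x_i,\epsilon/2) \cap V_i$ in $Q$, while $x_j$ has more than half of $B(x_j,\epsilon/2) \cap V_j$ in $M \setminus Q$. Consider the geodesic $\gamma \colon [0,1] \to M$ from $x_i$ to $x_j$, of length at most $2\epsilon$. The MV-constant choice $\epsilon < \mu/4$ guarantees that $B(\gamma(t),\epsilon/2)$ is embedded for all $t$. The map $t \mapsto \vol(Q \cap B(\gamma(t), \epsilon/2))$ is continuous, exceeds half the ball's volume at $t=0$, and falls below at $t=1$, so at some $t_0$ the surface $F$ bisects $B(\gamma(t_0), \epsilon/2)$. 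A standard hyperbolic isoperimetric lower bound for a surface bisecting a ball of radius $\epsilon/2$ then yields
\[
\text{area}\bigl(F \cap B(\gamma(t_0), \epsilon/2)\bigr) \ge c_0
\]
for a constant $c_0 = c_0(\epsilon) > 0$.

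To control overcounting, note that each witness ball lies within $3\epsilon/2$ of a Voronoi center. Since the centers are $\epsilon$-separated and, near any point in the thick part, balls of radius $3\epsilon$ are embedded, a volume-packing estimate bounds the number of Voronoi centers within $3\epsilon$ of any point $p$ by a constant depending only on $(\mu,\epsilon)$; combined with Proposition \ref{prop:bound}, which bounds the number of faces incident to each deep Voronoi cell by $J$, the multiplicity of the assignment $\Phi \mapsto B(\gamma_\Phi(t_0),\epsilon/2)$ is bounded by a constant $C = C(\mu,\epsilon)$. Summing the area estimate over all faces of $S$ meeting $X_\mu$ and dividing by $C$ yields the first inequality with $K = C/c_0$.

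For the boundary bound, each boundary component of $S \cap X_\mu$ is a simple closed curve in $\partial X_\mu$ pieced together from arcs of $\Phi \cap \partial X_\mu$ as $\Phi$ runs over faces of $S$, so it suffices to bound the total number of such arcs. Any face $\Phi$ meeting $\partial X_\mu$ is a geodesic polygon of diameter at most $2\epsilon$ lying in the deep part, and the component of $\partial X_\mu$ it meets is an equidistant surface of a short geodesic; by compactness of the space of such configurations modulo hyperbolic isometry, $\Phi \cap \partial X_\mu$ consists of at most $C' = C'(\mu, \epsilon)$ arcs. Since any such face also meets $X_\mu$ by genericity and is therefore counted in $\|S \cap X_\mu\|$, we obtain
\[
\norm{\partial(S \cap X_\mu)} \le C' \, \|S \cap X_\mu\| \le C' K \, \text{area}(F).
\]
I expect the main obstacle to be verifying the two uniform constants $c_0(\epsilon)$ and $C'(\mu,\epsilon)$ cleanly, especially ensuring that the arc-count $C'$ does not depend on the length of the short geodesic inside the Margulis tube bounded by $\partial X_\mu$, only on the MV-constants; the rest is book-keeping to confirm all estimates depend only on $(\mu,\epsilon)$ and not on $M$.
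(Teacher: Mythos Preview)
Your approach is correct and matches the argument from \cite{hoffoss-maher} that the paper cites; the paper does not reprove this proposition, but only remarks afterward that the cited result for compact $M$ and Morse level sets carries over because the proof uses only that $F$ is separating, and that $S\cap X_\mu\subset S\cap \W$. The bisecting-ball mechanism you describe for the face count is exactly the one summarized in the paper's outline.

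Your worry about the constant $C'$ can be resolved more cleanly than by a compactness argument. In the universal cover, a face $\Phi$ lies in a totally geodesic plane $P$, and the component of $T_\mu$ it meets lifts to the radius-$r$ tube about a geodesic $L$. The function $d(\,\cdot\,,L)$ is convex on $\H^3$, hence on $P$, so $P\cap T_\mu=\{p\in P:d(p,L)\le r\}$ is a convex subset of $P$. Since $\Phi$ is also convex, $\Phi\cap T_\mu$ is convex, and the arcs of $\partial X_\mu\cap\Phi$ alternate around $\partial(\Phi\cap T_\mu)$ with arcs lying in $\partial\Phi$; each edge of $\Phi$ meets the convex region in a connected segment, so the number of arcs is at most the number of edges of $\Phi$, which is at most $J$ by Proposition~\ref{prop:bound}. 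As the paper records that each deep Voronoi cell meets at most one component of $T_\mu$, only one tube is in play per face, giving $C'\le J$ independently of the length of the core geodesic.
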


In \cite{hoffoss-maher}, this result is stated for the level set of a
Morse function $F$ on a compact hyperbolic manifold, and one may then
observe that every separating surface is the level set of some Morse
function, though in fact, the proof only uses the fact that $F$ is
separating.  In \cite{hoffoss-maher}*{Proposition 2.10} the bound is
stated in terms of $S \cap \W$.  However, as $S \cap X_\mu \subset S
\cap \W$, the stated bound follows immediately.

For a polyhedral surface $S$, each boundary component of the surface
$S \cap X_\mu$ is contained in $T_\mu$, so $S \cap X_\mu$ is a
properly embedded surface in $X_\mu$.  We now wish to cap off the
properly embedded surfaces $S \cap X_\mu$ with properly embedded
surfaces in $T_\mu$ to form closed surfaces.  We warn the reader that
the following definition differs slightly from the definition in
\cite{hoffoss-maher}, as we extend the definition to include the case
in which $T_\mu$ has infinite components.

\begin{definition}
A separating surface $F$ in $M$ gives rise to a polyhedral surface
$S$, which meets $\partial T_\mu$ transversely, and intersects
$\partial T_\mu$ in a collection of simple closed curves which is
separating in $\partial T_\mu$.  We replace $S$ inside the thin part
by surfaces $\{ U_i \}$ which we now describe.  For each torus
component $T_i$ in $\partial T_\mu$ choose a subsurface $U_i$ bounded
by $S \cap \partial T_i$.  For each infinite component $T_i$, choose a
not necessarily connected surface $U_i$ as follows: for each essential
curve in the annulus $\partial T_i$ choose a disc it bounds in $T_i$,
and then let $U_i$ be the union of these discs with the planar surface
bounded by the remaining inessential curves.  We call the resulting
surface a \emph{capped surface} $S^+ = (S \cap X_\mu) \cup \bigcup_i
U_i$.
\end{definition}

We will use the following property of the capped surfaces.

\begin{proposition} \label{prop:capped}
Let $(\mu, \e)$ be $MV$-constants, and let $M$ be a complete
hyperbolic $3$-manifold without cusps, with thin part $T_\mu$, and
with with an $\e$-regular Voronoi decomposition $\V$. Then there is a
constant $K$, which only depends on $\e$, such that for any generic
embedded separating surface $F$ in $M$, the corresponding capped
surface $S^+$ satisfies:
\[ \text{genus}(S^+) \le K \text{area}(F). \]
\end{proposition}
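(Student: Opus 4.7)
The plan is to bound $|\chi(S^+)|$ and the number of connected components of $S^+$ separately, each linearly in $\text{area}(F)$, and then combine these via the identity $\text{genus}(S^+) = c' - \tfrac{1}{2}\chi(S^+)$, where $c'$ is the number of components of $S^+$.

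I would first bound $|\chi(S \cap X_\mu)|$. Viewing $S \cap X_\mu$ as a CW complex whose 2-cells are the Voronoi faces it contains, Proposition~\ref{cor:bound} bounds the number of 2-cells by $K_0\text{area}(F)$. By Proposition~\ref{prop:bound}, each Voronoi face has at most $J$ edges and vertices, where $J$ depends only on $\e$, and each edge or vertex is shared by a uniformly bounded number of faces. Hence the numbers of 1-cells and 0-cells are also linearly bounded, yielding $|\chi(S \cap X_\mu)| \le K_1 \text{area}(F)$.

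Next I would bound $\sum_i |\chi(U_i)|$. Each $U_i$ lies in a torus component of $\partial T_\mu$ or in the annular boundary of an infinite Margulis tube, so every component of $U_i$ has genus at most $1$. Choosing $U_i$ empty whenever $S$ misses $\partial T_i$, every component of $U_i$ has at least one boundary circle; a component with $n$ boundary circles and genus $g \le 1$ satisfies $|\chi| = |2 - 2g - n| \le 2 + n$. If $c$ and $k$ denote the total number of components and boundary circles of $\bigsqcup_i U_i$, then $\sum_i |\chi(U_i)| \le 2c + k$; since $c \le k$ and $k = \norm{\partial(S \cap X_\mu)}$, Proposition~\ref{cor:bound} gives $\sum_i |\chi(U_i)| \le K_2 \text{area}(F)$.

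Since $S^+$ is obtained by gluing $S \cap X_\mu$ to $\bigsqcup_i U_i$ along their common boundary circles, and circles have Euler characteristic $0$, additivity gives $|\chi(S^+)| \le (K_1+K_2)\text{area}(F)$. Each component of $U_i$ has at least one boundary circle glued to $S \cap X_\mu$, so every component of $S^+$ contains at least one face of $S \cap X_\mu$; hence $c' \le \|S \cap X_\mu\| \le K_0\text{area}(F)$. Combining via $\text{genus}(S^+) \le c' + \tfrac{1}{2}|\chi(S^+)|$ yields the claim. The only subtlety is making sure the $U_i$ contribute no uncontrolled genus, which is automatic since each $U_i$ embeds in a torus or annulus and its genus contribution is therefore bounded by the component and boundary counts, both already linearly controlled by Proposition~\ref{cor:bound}.
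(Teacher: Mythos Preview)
Your approach is essentially the one the paper intends: the paper's proof simply invokes \cite{hoffoss-maher}*{Proposition 2.14} and remarks that for infinite Margulis tubes the caps $U_i$ are planar and hence add no genus. Your explicit Euler-characteristic bookkeeping, using Proposition~\ref{cor:bound} for the face and boundary-component counts and Proposition~\ref{prop:bound} for the local combinatorial bound, is exactly the content of that cited result, and you correctly handle the infinite-tube case.

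There is one imprecision worth flagging. You write that the $2$-cells of $S\cap X_\mu$ are ``the Voronoi faces it contains'', but $S\cap X_\mu$ is not a union of entire Voronoi faces: the torus (or annulus) $\partial T_\mu$ is transverse to the Voronoi decomposition and may slice a face $\Phi$ into several pieces, introducing extra $0$-, $1$- and $2$-cells that your count does not see. Without controlling these, the inequality $|\chi(S\cap X_\mu)|\le K_1\,\text{area}(F)$ is not yet justified, and this is not a cosmetic point: bounding the genus of $S\cap X_\mu$ is precisely the heart of the matter, and it does not follow formally from the face count and the boundary-circle count alone. The fix is that the same bounded-geometry estimate underlying the boundary-component half of Proposition~\ref{cor:bound} (each deep face meets $\partial T_\mu$ in a uniformly bounded number of arcs, since $\partial T_\mu$ has curvature controlled by $\mu$ and each face has diameter at most $2\epsilon$) also bounds the number of extra cells per face, after which your cell-count argument goes through verbatim with a larger constant. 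With that addition the proof is complete.
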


The proof of this result is essentially the same as the proof of
\cite{hoffoss-maher}*{Proposition 2.14}, and instead of repeating the
entire argument, we explain the minor extension needed. The only
difference is that \cite{hoffoss-maher}*{Proposition 2.14} is stated
for closed hyperbolic manifolds, whereas Proposition \ref{prop:capped}
is stated for complete hyperbolic manifolds without cusps, so the thin
part of $M$ may have infinite Margulis tubes.  This makes no
difference to the estimates of the number of faces and boundary
components of the resulting polyhedral surface in terms of the area of
the original surface.  The extension of the definition of capped
surface to the infinite case only involves capping off with planar
surfaces, so the same genus bounds hold.

\subsection{Disjoint equivariant surfaces}\label{section:equivariant}

Each collection of points $t_i$ in $Y$ corresponds to a collection
$S^+_i$ of capped surfaces.  In this section we show that if the
collection of points is equivariant, then we may arrange for the
capped surfaces to be disjoint and equivariant.

Let $M$ be a $3$-manifold which admits a group of covering
translations $G$. We say a subset $U \subset M$ is \emph{equivariant}
if it is preserved by $G$. We say a Voronoi decomposition $\V$ of $M$
is \emph{equivariant} if the centers of the Voronoi cells form an
equivariant set in $M$.

Let $W$ be an equivariant collection of points in $\widetilde X$, none
of which are either cell splitters or critical points of the Morse
function $h$. We say two points $t_i$, $t_j$ in $W$ are
\emph{adjacent} if the geodesic connecting them in the tree $\Xt$ does
not contain any other point of $W$. We may choose $W$ such that the
geodesic in $\Xt$ connecting any pair of adjacent points in $W$
contains either a single cell splitter, a single trivalent trivalent
vertex of $\Xt$, or neither of these two types of points.

Consider the collection $S$ of polyhedral surfaces $S_t$, as $t$ runs
over $W$.  As the collection $W$ is equivariant, $S$ is also
equivariant.  Note that although each surface in $S$ is individually
embedded, each surface in $S$ will share many common faces with other
surfaces in $S$.  We will now make this collection simultaneously
equivariantly disjoint, so that we may push them down to $M$ to obtain
a collection of disjoint surfaces which will act as our splitting
surfaces in a graph splitting of $M$.

\begin{proposition}
Let $M$ be a closed hyperbolic 3-manifold of injectivity radius at
least $2\e$, with an $\e$-regular Voronoi decomposition $\V$, and a
generic Morse function $f : M \rightarrow X$ onto a trivalent graph
$X$ with connected level sets. Let $p \colon \twid{M} \rightarrow M$
be the cover of $M$ corresponding to the kernel of the induced map
$f_* \colon \pi_1 M \rightarrow \pi_1 X $, and let
$c:\twid{X}\rightarrow X$ be the universal cover of $X$.  Let $W$ be a
discrete equivariant collection of points in $\widetilde X$.  Then the
collection of polyhedral surfaces $\{ S_w \mid w \in W \}$ in
$\twid{M}$ is equivariantly isotopic to a disjoint collection of
surfaces $\{ \S_w \mid w \in W \}$, and furthermore this equivariant
isotopy may be chosen to be supported in a neighborhood of the
2-skeleton of the induced Voronoi decomposition of $\widetilde M$.
\end{proposition}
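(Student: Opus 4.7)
The plan is to construct the disjoint surfaces $\mathcal{S}_w$ as small pushoffs of the polyhedral surfaces $S_w$ inside an equivariant tubular neighborhood of the $2$-skeleton of the Voronoi decomposition of $\widetilde{M}$. The starting point is a clean combinatorial description: for a face $\Phi = V_i \cap V_j$ of the Voronoi decomposition, the cells $V_i, V_j$ lie on opposite sides of $H_w = h^{-1}(w)$ precisely when the cell splitters $t_{V_i}, t_{V_j}$ lie on opposite sides of $w$ in the tree $\widetilde{X}$, since by the definition of the cell splitter the side containing the majority of $B(x_V, \epsilon/2) \cap V$ flips exactly when $w$ crosses $t_V$. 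Hence $\Phi \subset S_w$ iff $w$ lies on the geodesic from $t_{V_i}$ to $t_{V_j}$ in $\widetilde{X}$, so $W_\Phi = \{w \in W \mid \Phi \subset S_w\}$ is the finite intersection of $W$ with that geodesic, canonically linearly ordered along it.

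I would then fix a $G$-equivariant tubular neighborhood $N$ of the $2$-skeleton and build $\mathcal{S}_w$ piece by piece inside $N$: for each face $\Phi \subset S_w$ take a parallel copy of $\Phi$ displaced into an adjacent Voronoi cell by an amount $\lambda_\Phi(w)$ that depends strictly monotonically on $w$'s position in $W_\Phi$, and then patch adjacent pieces together along short strips near the edges and small caps near the vertices of the $2$-skeleton. The coherence of the patching is the geometric heart of the argument. At an edge $e$ shared by three cells $V_i, V_j, V_k$, the three cell splitters $t_i, t_j, t_k$ span a tripod in $\widetilde{X}$ with centroid $c$; by genericity every $w \in W$ whose polyhedral surface meets $e$ lies strictly on one arm of this tripod, say the arm toward $t_k$, so $S_w$ near $e$ is the V-shape $\Phi_{ik} \cup \Phi_{jk}$ wrapping around $V_k$. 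I would choose $\lambda_\Phi(w)$ so that both of these faces are pushed into $V_k$ by a common amount depending equivariantly on the tree distance from $w$ to $c$ along the arm. Then for each such $w$ the two pushoffs are joined by a short strip inside $V_k$ parallel to $e$, pushoffs for distinct $w$'s on the same arm are nested by strict monotonicity, and pushoffs for $w$'s on different arms live in disjoint Voronoi cells and so are automatically disjoint near $e$. An analogous analysis at each vertex of the $2$-skeleton, where four incident cells and four cell splitters give a normal quadrilateral pattern in the dual tetrahedron, lets the quadrilateral pieces be joined by small capping disks.

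Equivariance of the whole construction is automatic: every ingredient — the Voronoi decomposition, the set $W$, the cell splitters, the tree structure of $\widetilde{X}$, and the tree-distance pushoff amplitudes — is $G$-equivariant, and the tubular neighborhood can be chosen equivariantly. The resulting isotopy is supported in $N$, which is a neighborhood of the $2$-skeleton, as required. The main obstacle is the patching of pushoffs near edges and vertices of the $2$-skeleton; the reason it works is that the local tree-combinatorial data near each cell-splitter tripod (or Steiner configuration at a vertex) dictates exactly which pushoffs must meet and in which Voronoi cell, and the common tree-distance-to-centroid amplitude forces consistent matching while keeping pushoffs for different $w$'s in disjoint strata of $N$.
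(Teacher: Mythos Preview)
Your proposal is correct and is essentially the paper's argument viewed from the Voronoi side rather than the dual side: the paper observes the same key fact that $\Phi\subset S_w$ iff $w$ lies on the geodesic $[t_{V_i},t_{V_j}]$ in $\widetilde X$, and then uses the tripod/median structure of three cell splitters exactly as you do, but phrases the disjointness step as a normal isotopy of the $S_w$ in the dual triangulation (ordering the intersection points on each dual edge by the tree order, then checking on each dual triangle that parallel normal arcs stay nested) rather than as explicit pushoffs with amplitude $\lambda_\Phi(w)$ in a tubular neighborhood of the Voronoi $2$-skeleton. The two descriptions are dual to one another and the combinatorial content is identical; the normal-surface formulation has the minor advantage that once intersections are disjoint on the dual $2$-skeleton the normal discs in each tetrahedron are determined, so no separate vertex/tetrahedron patching step is needed.
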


\begin{proof}
We now give a recipe for constructing surfaces $\S_t$, for $t \in
W$. Each individual surface $\S_t$ will be isotopic to the original
$S_t$, but the union of the surfaces $\S_t$ will be equivariantly
disjointly embedded in $\twid{M}$.

We first show that there is a canonical ordering of the polyhedral
surfaces $\S_t$ which share a common face.  Let $\Phi$ be a face of a
Voronoi cell in $\widetilde M$, and let $V(x_1)$ and $V(x_2)$ be the
adjacent Voronoi cells. Let $t_1$ and $t_2$ be cell splitters for
$V(x_1)$ and $V(x_2)$, so that $H_{t_i} = h^{-1}(t_i)$ is the surface
which divides $B_{\e/2}(x_i)$ precisely in half, for $i = 1,2$.

We say a point in $\Xt$ is \emph{regular} if it is not a
cell splitter, and not a critical point for the Morse function $h$.

\begin{claim}
The collection of regular points in $\widetilde X$ corresponding to
polyhedral surfaces $\S_t$ which contain the face $\Phi$ is precisely
the regular points contained in the geodesic in $\Xt$ from $t_1$ to
$t_2$.
\end{claim}

\begin{proof}
The two embedded surfaces $H_{t_1}$ and $H_{t_2}$ divide $\Mt$ into
three parts; call them $A, B$ and $C$, with $A$ the part only hitting
$H_{t_1}$, and $B$ the part hitting both $H_{t_1}$ and $H_{t_2}$.

Let $\gamma$ be the geodesic in $\Xt$ from $t_1$ to $t_2$. Each point
$t$ in $\gamma$ corresponds to a surface $H_t$ dividing $\Mt$ at most
3 parts, one of which contains $A$, and another containing $C$. Let
$P_t$ be the part containing $A$.  Then, writing $\norm{A}$ for the
volume of a region $A$, 
\[ \norm{ B_{\e/2}(x_1) \cap P_t} \ge \norm{B_{\e/2}(x_1)
  \cap A} \ge \half \norm{ B_{\e/2}(x_1) }
\]
and 
\[ \norm{ B_{\e/2}(x_2)
  \setminus P_t} \ge \norm{B_{\e/2}(x_2) \setminus C} \ge \half \norm{
  B_{\e/2}(x_2) }.
\]
Therefore the two Voronoi cells $V(x_1)$ and $V(x_2)$ lie in different
partitions of the Voronoi cells determined by $t$, and so $\Phi$ lies
in the polyhedral surface $\S_t$.

Conversely, suppose $t$ does not lie on the path $\gamma$, then $t$
divides $\Xt$ into at most three parts, and $\gamma$ is contained in
exactly one of these parts. This means that $H_{t_1}$ and $H_{t_2}$
are contained in the same complementary component of $H_t$, and so $\Phi$
cannot be a face of $\S_t$.
\end{proof}

It suffices to show that we can isotope the normal surfaces,
preserving the fact that they are normal, so that they have disjoint
intersection in the $2$-skeleton of the dual triangulation.

Let $e$ be an edge of the dual triangulation, with vertices $x_1$ and
$x_2$, with corresponding cell splitters $t_1$ and $t_2$. A normal
surface $S_i$ intersects $e$ if and only if the corresponding point
$w_i$ lies in the geodesic $[t_1, t_2]$ in $\Xt$ connecting $t_1$ and
$t_2$. The points $w_i$ in $e$ therefore inherit an order from $[t_1,
t_2]$, and we may isotope the normal surfaces by a normal isotopy so
that they intersect the edge $e$ in the same order. As the interiors
of each edge have disjoint images under the covering translations, and
the collection of edges is equivariant, we may do this normal isotopy
equivariantly.

Let $\Phi$ be a triangle in the dual triangulation, with vertices
$x_1, x_2$ and $x_3$, and corresponding cell splitters $t_1, t_2$ and
$t_3$. As above, the collection of normal surfaces which intersect an
edge $[x_i, x_j]$ of $\Phi$ corresponds to those $w_i$ lying in the
geodesic $[t_i, t_j]$ in $\Xt$. The union of the three geodesics
$[t_i, t_j]$ forms a minimal spanning tree for the three cell
splitters in $\Xt$.  Let $t_0$ be the center of this tree, i.e. the
unique point that lies in all three geodesics. Note that the tree may
be degenerate, so $t_0$ may be equal to one of the other vertices.

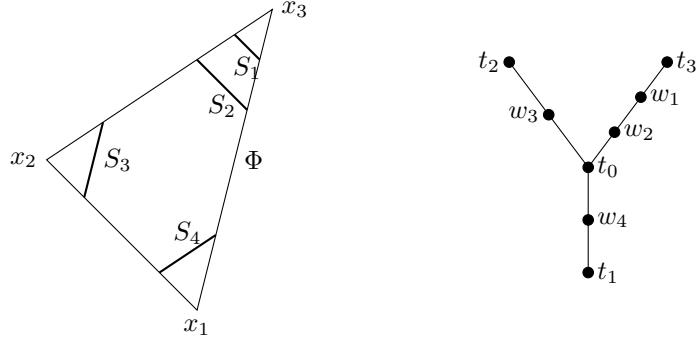
\begin{figure}[H]
\begin{center} 
\begin{tikzpicture}

\draw (0, 0) node [below] {$x_1$} -- 
      (1, 4) node [right] {$x_3$} node [midway, right] {$\Phi$}-- 
      (-2, 2) node [left] {$x_2$}-- cycle;

\draw [thick] ( $(0,0)!0.833!(1,4)$ ) -- node [midway, below] {$S_1$}
($(-2,2)!0.833!(1,4)$ );

\draw [thick] ( $(0,0)!0.666!(1,4)$ ) -- node [midway, below] {$S_2$}
($(-2,2)!0.666!(1,4)$ );

\draw [thick] ( $(-2,2)!0.25!(1,4)$ ) -- node
[midway, right] {$S_3$} ($(0,0)!0.75!(-2,2)$ );

\draw [thick] ( $(0,0)!0.25!(1,4)$ ) -- node [midway, above] {$S_4$}
($(0,0)!0.25!(-2,2)$ );

\begin{scope}[xshift=1cm, yshift=0.5cm, scale=1.4]
\filldraw[black] (3,0) circle (0.05cm) node [right]{$t_1$};
\filldraw[black] (2.25,2) circle (0.05cm) node [left]{$t_2$};
\filldraw[black] (3.75,2) circle (0.05cm) node [right]{$t_3$};
\filldraw[black] (3,1) circle (0.05cm) node [right]{$t_0$};
\filldraw[black] ( $(3.75,2)!0.333!(3, 1)$ ) circle (0.05cm) node [right] {$w_1$};
\filldraw[black] ( $(3.75,2)!0.666!(3, 1)$ ) circle (0.05cm) node [right] {$w_2$};
\filldraw[black] ( $(2.25,2)!0.5!(3, 1)$ ) circle (0.05cm) node [left] {$w_3$};
\filldraw[black] (3,0.5) circle (0.05cm) node [right]{$w_4$};

\draw (3,0) -- (3, 1) -- (3.75, 2);

\draw (2.25, 2) -- (3, 1); 

\end{scope}

\end{tikzpicture}%
\end{center} 
\caption{Example of normal surfaces intersecting a face of the dual
  triangulation.}
\label{pic:normal tree} 
\end{figure}

Normal arcs parallel to the edge $[x_2, x_3]$ correspond to surfaces
which hit both of the edges $[x_1, x_2]$ and $[x_1, x_3]$, so
correspond points $w_i$ which lie in both $[t_1, t_2]$ and $[t_1,
t_3]$, and similarly for the other two cases.  The intersection of
these two geodesics in $\Xt$ is $[t_1, t_0]$, and so the corresponding
surfaces appear in the same order on each of the edges in $\Phi$, and so
the arcs are disjoint. The same argument applies to each vertex of
$\Phi$.
\end{proof}

As the resulting surfaces in $\twid{M}$ are disjoint and equivariant,
they project down to disjoint surfaces in $M$.

We now show that the polyhedral surfaces, and their complements,
project down homeomorphically into $M$. As the level set surfaces lift
homeomorphically to $\twid{M}$, the area bound for the level sets of
$f$ is also an area bound for the level sets of $h$. Therefore, each
polyhedral surface contains a bounded number of faces.  The deck
transformation group of the universal cover of a graph is equal to the
fundamental group of the graph, which is a free group, so the orbit of
any face consists of infinitely many disjoint translates. If two lie
in the same connected component of a polyhedral surface, then that
path corresponds to a covering translation, which has infinite order,
so in fact the connected component contains infinitely many faces,
which contradicts the fact that there is a bound on the number of
faces in each component.

Each complementary region is compact, so the same argument applied to
the complementary regions shows that they are all mapped down
homeomorphically as well.

\subsection{Bounded handles}\label{section:bounded}

We now bound the number of handles in a complementary region of the
capped surfaces, which contains a single cell splitter.  The following
result will complete the proof of the left hand inequality in Theorem
\ref{theorem:graph}.

\begin{proposition}\label{bounded_handles} 
Let $(\mu, \e)$ be $MV$-constants, and let $M$ be a complete
hyperbolic $3$-manifold without cusps, with an $\e$-regular Voronoi
decomposition $\V$, and a generic Morse function $h \colon M \to Y$,
where $Y$ is a tree. Let $\{ u_i \}$ be a collection of points in $Y$,
which separate the cell splitters in $Y$, and let $\{ S^+_i \}$ be the
corresponding collection of capped surfaces.  If $P$ is a
complementary component of the capped surfaces in $M$, the region $P$
has at most three boundary components, $S^+_{i_1}, S^+_{i_2}$ and
$S^+_{i_3}$ say, where the final surface may be empty.  Then $P$ is
homeomorphic to a manifold with a handle decomposition containing at
most
\[ K \text{Gromov area} ( M ) \]
handles, where $K$ depends only on the $MV$-constants.
\end{proposition}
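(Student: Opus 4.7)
The plan is to build a handle decomposition of $P$ whose total number of handles is controlled by the genus of the bounding capped surfaces, which is in turn bounded by the Gromov area via Proposition~\ref{prop:capped}. By the hypothesis that $\{u_i\}$ separates cell splitters, the subtree of $Y$ whose pre-image lies in $P$ contains at most one cell splitter and at most one trivalent vertex of $Y$ in its interior: the two-boundary case arises when this subtree is an arc, the three-boundary case when it is a tripod with a trivalent vertex of $Y$ at its center.

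The first step is to identify the product structure of the generic part of $P$. On any subarc of the subtree missing both cell splitters and trivalent vertices, the polyhedral surface $S_t$ is literally constant as a subset of the 2-skeleton of the Voronoi decomposition, since by definition no Voronoi cell changes its polyhedral classification. After the equivariant isotopy of Section~\ref{section:equivariant}, the corresponding capped surfaces become disjoint parallel copies of a common surface $S^+$, so the portion of $P$ between them is homeomorphic to a product $S^+ \times I$. Such a product admits a handle decomposition using $O(\text{genus}(S^+))$ handles, and by Proposition~\ref{prop:capped} combined with our assumption on $h$, $\text{genus}(S^+) \le K\,\text{area}(H_t) \le K\,\text{Gromov area}(M)$. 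This gives $O(\text{Gromov area}(M))$ handles for the generic bulk.

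The second step is to bound the handles contributed by each singular point. At a cell splitter for a Voronoi cell $V$, exactly one Voronoi cell flips its polyhedral classification, so the two bounding polyhedral surfaces agree outside $\partial V$ and use complementary subcollections of the faces of $\partial V$ inside; after refining the isotopy, the local picture near $V$ is a 3-manifold piece whose complexity is dominated by the combinatorics of $V$. Since Proposition~\ref{prop:bound} bounds the number of faces of $V$ by a constant $J$ depending only on $\epsilon$, this piece contributes at most $O(J)$ handles. The local model of a trivalent singularity is the level set foliation of $x^2 + y^2 - z^2$, which supplies a standard pair-of-pants cobordism inside a 3-ball and contributes another $O(1)$ handles. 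Both bounds depend only on the $MV$-constants.

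Assembling these contributions yields a total handle count of at most $K'\,\text{Gromov area}(M)$ where $K'$ depends only on the $MV$-constants, as required. The main obstacle I anticipate is the careful coordination of the equivariant isotopy with the cappings $U_i$ in the thin part: a cell splitter for a Voronoi cell near a Margulis tube could in principle force the capping discs on the two sides of the splitter to be chosen differently, and one must show any such discrepancy is local and bounded. This should be controllable by a parallel argument using the bound on boundary components of $S \cap X_\mu$ from Proposition~\ref{cor:bound} together with Proposition~\ref{prop:bound}, so that any discrepancy in the cappings contributes at most boundedly many additional handles and is absorbed into the constant $K'$.
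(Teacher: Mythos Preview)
Your overall strategy---decompose $P$ into product pieces away from the cell splitter and a bounded local piece at the splitter, then invoke Proposition~\ref{prop:capped}---is close in spirit to the paper's argument, and the product analysis is fine. However, there is a genuine gap at the step where you bound the cell-splitter piece by $O(J)$ handles using Proposition~\ref{prop:bound}.

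Proposition~\ref{prop:bound} only bounds the number of faces, edges, and vertices of \emph{deep} Voronoi cells, i.e.\ those whose center has injectivity radius at least $4\epsilon$. If the Voronoi cell $V$ corresponding to the cell splitter is not deep---equivalently, if $V$ meets the thin part $T_\mu$---then no a priori bound on the combinatorics of $V$ is available, and your $O(J)$ estimate fails. This is not a minor technicality: the paper treats this as a separate case and brings in an additional ingredient, the result of Kobayashi and Rieck, which says that each connected component of $V \cap X_\mu$ is a handlebody of genus at most $G$ attached along a surface with at most $G$ boundary components, for a constant $G$ depending only on the $MV$-constants. This, combined with Proposition~\ref{bounded_handles3}, gives a uniform handle bound for attaching $V \cap X_\mu$. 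Without this result (or something equivalent) your argument does not close.

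Your final paragraph correctly senses that the thin part is the obstacle, but misdiagnoses where the difficulty lies. The issue is not primarily a discrepancy between choices of capping surfaces $U_i$---those can be chosen consistently whenever the polyhedral surfaces agree in $X_\mu$---but rather that when $V$ meets $T_\mu$ the components of $P \cap T_\mu$ need not be products at all, and their topology must be controlled directly. The paper handles this by a case analysis (including the possibility of infinite Margulis tubes) that cannot be absorbed into Propositions~\ref{cor:bound} and~\ref{prop:bound} alone.
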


We start with the observation that attaching a compression body $P$ to
a $3$-manifold $Q$ by a subsurface $S$ of the upper boundary component
of $P$, requires a number of handles which is bounded in terms of the
Heegaard genus of $P$, and the number of boundary components of the
attaching surface.

\begin{proposition}\label{bounded_handles3}\cite{hoffoss-maher}*{Proposition 2.16}
Let $Q$ be a compact $3$-manifold with boundary, and let $R = Q \cup
P$, where $P$ is a compression body of genus $g$, attached to $Q$ by a
homeomorphism along a (possibly disconnected) subsurface $S$ contained
in the upper boundary component of $P$ of genus $g$. Then $R$ is
homeomorphic to a $3$-manifold obtained from $Q$ by the addition of at
most $(4g + 2 \norm{\partial S})$ $1$-and $2$-handles, where
$\norm{\partial S}$ is the number of boundary components of $S$.
\end{proposition}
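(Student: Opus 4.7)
The plan is to build $P$ from one of its capped boundary components, say $S^+_{i_1}\times I$, by attaching a controlled collection of handles. The construction in Section~\ref{section:equivariant} allows us to choose the equivariant set $W$ (and hence the projected collection $\{u_i\}$) so that each complementary component $P$ corresponds to a subtree of $Y$ containing at most one cell splitter and at most one trivalent vertex of $Y$, which also forces $P$ to have at most three boundary components. This yields four cases for $P$: (a) neither a cell splitter nor a trivalent vertex (two boundary components); (b) exactly one cell splitter at an interior edge point (two boundary components); (c) a trivalent vertex of $Y$ that is not a cell splitter (three boundary components); and (d) a single trivalent cell splitter (three boundary components).

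I would then analyze the polyhedral structure of $P$ case by case. In case (a) the two polyhedral surfaces coincide as subsets of the Voronoi $2$-skeleton, so after the equivariant isotopy $P$ is a product neighborhood plus caps, contributing no handles beyond the initial collar. In case (b), the two polyhedral surfaces differ by exactly one Voronoi cell $V^*$ switching sides at the cell splitter, so the thick part of $P$ is a product region together with a neighborhood of $V^*$; by Proposition~\ref{prop:bound}, $V^*$ has at most $J$ faces, so this local piece contributes at most $O(J)$ handles. In case (c), the three polyhedral surfaces differ only in a bounded neighborhood of the $2$-skeleton where cells of all three majority types meet, and $P$ is a bounded combinatorial neighborhood of the pair-of-pants singular leaf of $h$, contributing $O(1)$ handles from the local trivalent singularity. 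Case (d) combines cases (b) and (c).

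To convert this polyhedral-and-local description into a handle decomposition of $P$ with its capped boundary, I would apply Proposition~\ref{bounded_handles3} once for each cap on each of the at most three boundary components. Each cap $U_{i_j}$ sits in a Margulis tube, and attaching the corresponding compression body requires at most $4g_{i_j}+2\norm{\partial U_{i_j}}$ handles. By Propositions~\ref{cor:bound} and~\ref{prop:capped}, both $g_{i_j}$ and $\norm{\partial U_{i_j}}$ are bounded by $K'\cdot\text{area}(F_{i_j})\le K'\cdot\text{Gromov area}(M)$. Summing the $O(1)$ contribution from the local piece with the at most three compression-body contributions from the caps yields a handle count of $O(\text{Gromov area}(M))$, with constant depending only on the $MV$-constants.

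The main technical obstacle is case (c), the trivalent-vertex case: the three polyhedral surfaces, which before the equivariant isotopy coincide on many shared faces, must after the isotopy cobound a $3$-dimensional region of bounded combinatorial complexity. Verifying that the resulting region near the pair-of-pants singular leaf really only requires $O(1)$ handles, independent of the genus of the capped surfaces, requires a careful local analysis of how the isotopy of Section~\ref{section:equivariant} pushes the three disjoint normal surfaces apart in a neighborhood of the Voronoi $2$-skeleton, and how those local pieces meet the boundary of the Margulis tubes before the caps are attached.
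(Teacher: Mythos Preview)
Your proposal does not address the stated proposition at all. Proposition~\ref{bounded_handles3} is a self-contained topological lemma: given a compression body $P$ of genus $g$ glued to an arbitrary compact $3$-manifold $Q$ along a subsurface $S$ of $\partial_+ P$, the result can be obtained from $Q$ using at most $4g + 2\norm{\partial S}$ handles. There are no capped surfaces, no cell splitters, no Morse function, no Gromov area, and no Margulis tubes in this statement. A proof should work directly with the handle structure of a compression body: take a minimal disc system for $P$ cutting it to $\partial_- P \times I$, count how the attaching region $S$ meets those discs and the resulting product, and assemble the handle count from that.

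What you have written is instead an outline for Proposition~\ref{bounded_handles}, the proposition immediately preceding this one, which bounds the handle count of a complementary region of the capped surfaces in terms of Gromov area. The giveaway is that you explicitly invoke Proposition~\ref{bounded_handles3} as a tool (``I would apply Proposition~\ref{bounded_handles3} once for each cap\ldots''), which would be circular if you were actually proving it. In the paper, Proposition~\ref{bounded_handles3} is not proved here at all; it is simply quoted from \cite{hoffoss-maher}*{Proposition 2.16} and used as an ingredient in the proof of Proposition~\ref{bounded_handles}. You have confused the lemma with the proposition that uses it.
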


\begin{proof}[Proof (of Proposition \ref{bounded_handles}).]
If $P$ has two boundary components, then the argument is exactly the
same as \cite{hoffoss-maher}*{Proposition 2.15}, so we now consider the
case in which $P$ has three boundary components, which, without loss
of generality we may relabel $S^+_{1}, S^+_{2}$ and $S^+_3$.  Let $t$
be the cell splitter corresponding to the region $P$, and let $V$ be
the corresponding Voronoi cell.  As $P$ has three boundary components,
$t$ must be a vertex of $Y$.

We first consider the case in which the Voronoi region $V$
corresponding to the cell splitter $t$ in $h(P)$ is disjoint from the
thin part $T_\mu$.  Consider the three polyhedral surfaces $S_1, S_2$
and $S_3$, corresponding to the three capped surfaces, and let
$\S = \cup S_i \cup V$ be the union of the polyhedral surfaces,
together with the Voronoi cell $V$.  By Proposition \ref{cor:bound},
there is a constant $K$, which only depends on the $MV$-constants,
such that the number of faces of $\S$ in the thick part is at most
$3 K_1 \text{Gromov area}(M)$, i.e.
\[ \| \S \cap X_\mu \| \le 3 K_1 \text{Gromov area}(M), \]
where $K_1$ is the constant from Proposition \ref{cor:bound}.  The
number of boundary components of each surface $S_i \cap X_\mu$ is also
bounded by Proposition \ref{cor:bound}, and by Proposition
\ref{prop:bound}, the Voronoi cell $V$ has a bounded number $J$ of
vertices, edges and faces, where $J$ depends only on the
$MV$-constants.  In particular, there is a constant $A$, depending
only on the $MV$-constants, such that $P \cap X_\mu$ has a handle
structure with at most $A ( \text{Gromov area}(M) )$ handles.

To bound the number of handles contained in $P$, we observe that $P$
is a regular neighbourhood of the $3$-complex obtained from capping
off the boundary components of $\S \cap X_\mu$, using the parts of the
capped surfaces in the thin part, i.e. the union of the components of
$S^+_i \cap T_\mu$ over all three capped surfaces.  Each component of
$S^+_i \cap T_\mu$ has genus at most one, and the number of boundary
components of $\S \cap X_\mu$ is bounded linearly in terms of
$\text{Gromov area}(M)$, therefore, there is a constant $B$, depending
only on the $MV$-constants, such that the number of handles in $P$ is
at most $B ( \text{Gromov area}(M) )$, as required.  

We now consider the case in which the region $P$ has image $h(P)$ in
$Y$ which contains the cell splitter $t$, and the corresponding
Voronoi cell $V$ intersects $T_\mu$.  In this case, the connected
components of $V \cap X_\mu$ need not be topological balls, and there
may be connected components of $P \cap T_\mu$ whose boundary components
are not parallel.

The connected components of $V \cap X_\mu$ are handlebodies of bounded
genus, as show in the following result of Kobayashi and Rieck
\cite{kobayashi-rieck}.  We state a simplified version of their result
which suffices for our purposes, see \cite{hoffoss-maher} for further
details.

\begin{proposition}\cite{kobayashi-rieck}
Let $\mu$ be a Margulis constant for $\mathbb{H}^3$, $M$ be a finite
volume hyperbolic $3$-manifold, let $0 < \e < \mu$, and let $\V$ be a
regular Voronoi decomposition of $M$ arising from a maximal collection
of $\e$-separated points. Then there is a number $G$, depending only
on $\mu$ and $\e$, such that for any Voronoi cell $V_i$, there are at
most $G$ connected components of $V_i \cap X_\mu$, each of which is a
handlebody of genus at most $G$, attached to $T_\mu$ by a surface with
at most $G$ boundary components.
\end{proposition}

Recall that attaching a handlebody of genus $G$ to a $3$-manifold
along a subsurface of the boundary with at most $G$ boundary
components requires at most $6G$ handles:

\begin{proposition}\cite{hoffoss-maher}*{Proposition 2.16}
Let $Q$ be a compact $3$-manifold with boundary and let $R = Q \cup P$
, where $P$ is a compression body of genus $g$, attached to $Q$ by a
homeomorphism along a (possibly disconnected) subsurface $S$ contained
in the upper boundary component of $P$ of genus $g$. Then $R$ is
homeomorphic to a $3$-manifold obtained from $Q$ by the addition of at
most $(4 \text{genus} + 2 \norm{ \partial S})$ $1$- and $2$-handles,
where $\norm{\partial S}$ is the number of boundary components of $S$.
\end{proposition}

Therefore, adding a Voronoi cell which intersects $\partial T_\mu$ may
be realized by at most $6 G^2$ handles.

If the Voronoi cell intersects $T_\mu$, then there may be components
of $P \cap T_\mu$ whose boundary surfaces are not parallel.  This case
is considered in the proof of \cite{hoffoss-maher}*{Proposition 2.15},
when the manifold has no infinite Margulis tubes, so it suffices to
consider the case of a component of $P$ contained in an infinite
Margulis tube.  However, the case of an infinite Margulis tube in
which neither surface is an essential disc is the same as the ordinary
Margulis tube case, and if both surfaces essential discs then they are
parallel.  Finally, if exactly one surface is an essential disc, then
the other surface lies in the same homology class, via the component
of $P$ in the infinite Margulis tube, and so, after surgering
inessential boundary components, is also an essential disc.  However,
the number of boundary components is at most $K_1 \text{Gromov
  area}(M)$, and so the total number of extra handles over all
components of $P$ in the infinite Margulis tubes is also bounded by
$K_1 \text{Gromov area}(M)$.

We may choose the constant $K$ to be the maximum of the constants
arising from the two cases considered above, thus completing the proof
of Proposition \ref{bounded_handles3}.
\end{proof}

\section{Topological complexity bounds metric complexity}\label{section:sss}

In this section we will show bounds for metric complexity in terms of
topological complexity, i.e. the right hand inequality in Theorem
\ref{theorem:graph}, assuming the Pitts and Rubinstein result, Theorem
\ref{conjecture:pr}.

We start by reminding the reader of the topological properties of thin
position for generalized Heegaard splittings, as shown by Scharlemann
and Thompson \cite{st} for the linear case and Saito, Scharlemann and
Schultens \cite{sss} for the graph case.

\begin{theorem} \cites{st, sss} \label{theorem:sss} %
Let $H$ be a graph splitting that is in thin position. Then every even
surface is incompressible in $M$ and the odd surfaces form strongly
irreducible Heegaard surfaces for the components of $M$ cut along the
even surfaces.
\end{theorem}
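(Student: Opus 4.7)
The plan is to argue by contradiction along the classical lines of \cite{st, sss}: assume $H$ is a thin graph splitting but either some even (lower boundary) surface is compressible in $M$, or some odd (top boundary) surface fails to be a strongly irreducible Heegaard surface for its complementary component of $M$ cut along the even surfaces. In each case I would produce a new graph splitting whose complexity vector is strictly smaller in the lexicographic order, contradicting the assumption of thin position. The local surgery moves are exactly those used by Scharlemann and Thompson in the linear setting; what requires care is checking that they preserve the fork-complex adjacency structure of the graph splitting.

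First, suppose an even surface $F$ is compressible, with compressing disc $D$. Then $F$ is a common lower boundary component of two adjacent compression bodies $C_i$ and $C_j$, and $D$ lies in one of them, say $C_j$. The move I would use is to push the $2$-handle dual to $D$ across $F$ into $C_i$: equivalently, remove a $2$-handle from $C_j$ and attach the dual $1$-handle to $C_i$. Both pieces remain compression bodies, the new common lower boundary $F'$ is obtained from $F$ by surgery along $D$ and so has strictly smaller genus, and at least one of the top boundary genera of $C_i$ or $C_j$ strictly decreases. No other $c(H_k)$ changes, since the remaining lower boundaries of $C_i$ and $C_j$, and thus the fork-complex adjacencies with the rest of the decomposition, are untouched. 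Hence $\{c(H_k)\}$ drops lexicographically, contradicting thin position.

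Next, suppose an odd surface $H_\ell$ — the common top boundary of adjacent compression bodies $C_{2i-1}$ and $C_{2i}$ — is weakly reducible inside its complementary component. Then there exist disjoint compressing discs $D_1 \subset C_{2i-1}$ and $D_2 \subset C_{2i}$ on opposite sides of $H_\ell$. The Scharlemann-Thompson untelescoping move replaces the pair $(C_{2i-1}, C_{2i})$ by four compression bodies meeting along a new even surface $F'$, roughly the boundary of a regular neighborhood of $H_\ell$ compressed along $D_1$ (and dually along $D_2$). Both new odd surfaces have genus strictly smaller than that of $H_\ell$, and $F'$ has smaller genus than $H_\ell$ as well. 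The remaining lower boundaries of $C_{2i-1}$ and $C_{2i}$ are preserved, so the rest of the graph structure is unaffected and this yields a valid graph splitting with strictly smaller lex complexity.

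The main obstacle is the bookkeeping: one must verify that the local surgery moves really produce valid compression bodies, that their lower boundary identifications with the rest of the fork complex still match, and that the lexicographic ordering of the full complexity vector strictly decreases under each move. Neither the compression move for even surfaces nor the untelescoping for odd surfaces uses the linearity of the original splitting in any essential way, so the argument from the linear case of \cite{st} carries over to the graph setting essentially unchanged, as observed in \cite{sss}. The one extra care is that $C_i$, $C_j$ (resp. $C_{2i-1}$, $C_{2i}$) may have disconnected boundary, in which case the move is performed on the boundary component containing $\partial D$ (resp. $\partial D_1 \cup \partial D_2$); locally this looks exactly like the connected linear case.
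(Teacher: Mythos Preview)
The paper does not prove this theorem; it is quoted as a known result from \cite{st} and \cite{sss}, so there is no in-paper proof to compare against. Your overall strategy (contradiction via complexity-reducing moves) is the right one, and your second case (untelescoping a weakly reducible odd surface) is essentially the standard argument.

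However, your first case contains a genuine error. You assert that if the even surface $F$ is compressible in $M$, then a compressing disc $D$ lies in one of the two adjacent compression bodies $C_i$ or $C_j$. This is impossible: the lower boundary $\partial_- C$ of any compression body $C$ is incompressible in $C$, so no compressing disc for $F = \partial_- C_i = \partial_- C_j$ can be contained in either $C_i$ or $C_j$. Consequently the handle-pushing move you describe cannot be performed as stated, and no local modification of just $C_i$ and $C_j$ will witness the compressibility of $F$.

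The correct argument, as in \cite{st} and \cite{sss}, reverses the order of your two cases. First one shows (via untelescoping, as you do) that every odd surface is strongly irreducible. Then, assuming some even surface $F$ is compressible in $M$, one takes a compressing disc $D$ transverse to all even surfaces and minimizing intersection; an innermost-circle argument produces a compressing disc $D'$ for some even surface $F'$ lying entirely in a single block $N = C_a \cup_{H_\ell} C_b$ bounded by even surfaces. Since $\partial_- C_a$ and $\partial_- C_b$ are incompressible in $C_a$ and $C_b$ separately, $D'$ must cross the odd surface $H_\ell$; one then uses strong irreducibility of $H_\ell$ (together with a Haken/Casson--Gordon type lemma on compressing discs for $\partial N$ versus a strongly irreducible Heegaard surface) to derive a contradiction. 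The essential missing idea in your sketch is this interaction between the global compressing disc and the already-established strong irreducibility of the odd surfaces.
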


We will use the following result due to Gabai and Colding
\cite{colding-gabai}*{Appendix A}, building on recent work of Colding
and Minicozzi \cite{colding-minicozzi}.  It is not stated explicitly
in their paper, but see \cite{hoffoss-maher}*{Theorem 3.2} for further
details.

\begin{theorem} \cite{colding-gabai} \label{theorem:minimal}
Let $M$ be a hyperbolic manifold, with (possibly empty) least area
boundary, with a minimal Heegaard splitting $H$ of genus $g$. Then,
assuming Theorem \ref{conjecture:pr}, the manifold $M$ has a
(possibly singular) foliation by compact leaves, containing the
boundary surfaces as leaves, such that each leaf has area at most $4
\pi g$.
\end{theorem}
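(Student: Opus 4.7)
The plan is to combine Theorem \ref{conjecture:pr} with the min-max sweep-out machinery of Colding--Minicozzi. First, apply Theorem \ref{conjecture:pr} to isotope the genus $g$ Heegaard surface $H$ either to a minimal surface $\Sigma$, or to the boundary of a regular neighborhood of a non-orientable minimal surface with a small vertical tube attached. In the hyperbolic setting, the Gauss equation gives that the intrinsic curvature of a minimal surface $\Sigma$ is at most $-1$ (since the ambient sectional curvature is $-1$ and the determinant of the second fundamental form is $-\lambda^2 \le 0$ when $\text{trace} = 0$). Gauss--Bonnet then yields
\[ \text{area}(\Sigma) \le -2\pi \chi(\Sigma) = 4\pi(g-1) \le 4\pi g. \]
The non-orientable case gives the same bound: the double cover of the non-orientable minimal surface is itself minimal, and the tube contributes an arbitrarily small area that can be absorbed into the genus count.

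Next, I would invoke the Colding--Minicozzi sweep-out theorem. A Heegaard splitting of $M$ canonically determines a family of sweep-outs by a one-parameter family of surfaces (level sets of a Morse function whose extreme level sets are the spines of the two handlebodies); the \emph{width} of the splitting is the infimum, over all such sweep-out families, of the maximum area of any slice. The min-max theorem asserts that this width equals the area of a minimal surface produced by min-max theory. Applying Theorem \ref{conjecture:pr} to identify this min-max surface with $\Sigma$ (or with the tubed non-orientable surface), we obtain the bound $\text{width}(H) \le \text{area}(\Sigma) \le 4\pi g$.

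By the definition of width as an infimum, for every $\epsilon > 0$ there is a sweep-out in which every slice has area at most $4\pi g + \epsilon$. I would then take a limit of such sweep-outs as $\epsilon \to 0$, extracting in the limit a (possibly singular) foliation of $M$ by compact leaves with area at most $4\pi g$, using standard compactness theory for surfaces of bounded area. To incorporate the least-area boundary as a leaf, I would begin each sweep-out at the boundary component and extend inward: the least-area condition already makes $\partial M$ minimal, so it fits naturally as a leaf of the resulting foliation.

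The main obstacle is the passage from a near-minimizing sweep-out with slice-area bound $4\pi g + \epsilon$ to an actual (possibly singular) foliation achieving the sharp bound $4\pi g$. This requires careful compactness arguments: as $\epsilon \to 0$ the near-critical slices must be shown to converge, with singular leaves appearing precisely at the minimal surfaces realizing the width. Handling the non-orientable case with the small vertical tube, and ensuring that the limit configuration still foliates the complement of finitely many singular leaves, is where most of the technical work lies; this is the content of the Colding--Gabai appendix building on Colding--Minicozzi.
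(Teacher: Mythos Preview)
The paper does not prove this theorem. It is quoted as a result of Colding and Gabai \cite{colding-gabai}*{Appendix A}, with a pointer to \cite{hoffoss-maher}*{Theorem 3.2} for the precise formulation, and is then used as a black box to deduce the right-hand inequality in Theorem \ref{theorem:graph}. So there is no proof in the paper to compare your attempt against.

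That said, your sketch captures the correct skeleton: the area bound $\text{area}(\Sigma)\le 4\pi(g-1)$ for a minimal surface in a hyperbolic $3$-manifold via Gauss--Bonnet is exactly right, and the strategy of bounding the width of a sweep-out by the area of the min-max minimal surface and then extracting a limiting foliation is the intended mechanism. Two points are worth flagging. First, your logic in the second paragraph is slightly tangled: you do not need Theorem \ref{conjecture:pr} to ``identify'' the min-max limit with $\Sigma$; rather, the min-max limit is \emph{some} minimal surface, and the genus bounds of De Lellis--Pellandini and Ketover ensure it has genus at most $g$, which is what feeds into Gauss--Bonnet. Second, the cited Colding--Minicozzi input \cite{colding-minicozzi} concerns the singular set of mean curvature flow, which suggests that the actual Colding--Gabai argument tightens the sweep-out by flowing each slice by mean curvature (area is monotone decreasing) and controlling the singularities, rather than by a pure compactness-of-near-minimizers argument as you outline. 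Your compactness route is plausible in spirit but is not how the result is obtained in the reference; the mean curvature flow approach is what makes the passage from ``area $\le 4\pi g+\epsilon$ for every $\epsilon$'' to an honest foliation with the sharp bound tractable.
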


By Theorem \ref{theorem:sss}, we may consider the compression bodies in the
graph splitting in pairs, glued along strongly irreducible Heegaard
splittings, and then Theorem \ref{theorem:minimal} guarantees that
each pair has a foliation with each leaf having area at most $4 \pi
g$.  These foliations contain the boundary surfaces as leaves, and so
the foliations on each pair extend to foliations of the entire
manifold, as required.


\begin{bibdiv}
\begin{biblist}

\bib{bmns}{article}{
   author={Brock, Jeffrey},
   author={Minsky, Yair},
   author={Namazi, Hossein},
   author={Souto, Juan},
   title={Bounded combinatorics and uniform models for hyperbolic
   3-manifolds},
   journal={J. Topol.},
   volume={9},
   date={2016},
   number={2},
   pages={451--501},
   issn={1753-8416},
}

\bib{cdl}{article}{
   author={Colding, Tobias H.},
   author={De Lellis, Camillo},
   title={The min-max construction of minimal surfaces},
   conference={
      title={Surveys in differential geometry, Vol.\ VIII},
      address={Boston, MA},
      date={2002},
   },
   book={
      series={Surv. Differ. Geom., VIII},
      publisher={Int. Press, Somerville, MA},
   },
   date={2003},
   pages={75--107},
}

\bib{colding-gabai}{article}{
   author={Colding, Tobias H.},
   author={Gabai, David},
   title={Effective Finiteness of irreducible Heegaard splittings of non Haken 3-manifolds},
   eprint={arXiv:1411.2509},
   date={2014},
}

\bib{colding-minicozzi}{article}{
   author={Colding, Tobias Holck},
   author={Minicozzi, William P., II},
   title={The singular set of mean curvature flow with generic
   singularities},
   journal={Invent. Math.},
   volume={204},
   date={2016},
   number={2},
   pages={443--471},
   issn={0020-9910},
}

\bib{dlp}{article}{
   author={De Lellis, Camillo},
   author={Pellandini, Filippo},
   title={Genus bounds for minimal surfaces arising from min-max
   constructions},
   journal={J. Reine Angew. Math.},
   volume={644},
   date={2010},
   pages={47--99},
   issn={0075-4102},
}

\bib{gromov}{article}{
   author={Gromov, M.},
   title={Width and related invariants of Riemannian manifolds},
   language={English, with French summary},
   note={On the geometry of differentiable manifolds (Rome, 1986)},
   journal={Ast\'erisque},
   number={163-164},
   date={1988},
   pages={6, 93--109, 282 (1989)},
   issn={0303-1179},
}

\bib{hoffoss-maher}{article}{
   author={Hoffoss, Diane},
   author={Maher, Joseph},
   title={Morse area and Scharlemann-Thompson width for hyperbolic
   3-manifolds},
   journal={Pacific J. Math.},
   volume={281},
   date={2016},
   number={1},
   pages={83--102},
   issn={0030-8730},
}

\bib{kobayashi-rieck}{article}{
   author={Kobayashi, Tsuyoshi},
   author={Rieck, Yo'av},
   title={A linear bound on the tetrahedral number of manifolds of bounded
   volume (after J\o rgensen and Thurston)},
   conference={
      title={Topology and geometry in dimension three},
   },
   book={
      series={Contemp. Math.},
      volume={560},
      publisher={Amer. Math. Soc., Providence, RI},
   },
   date={2011},
   pages={27--42},
}

\bib{ketover}{article}{
	author={Ketover, Daniel},
	title={Degeneration of Min-Max Sequences in 3-manifolds},
	date={2013},
	eprint={arXiv:1312.2666},
}

\bib{pr}{article}{
   author={Pitts, Jon T.},
   author={Rubinstein, J. H.},
   title={Existence of minimal surfaces of bounded topological type in
   three-manifolds},
   conference={
      title={},
      address={Canberra},
      date={1985},
   },
   book={
      series={Proc. Centre Math. Anal. Austral. Nat. Univ.},
      volume={10},
      publisher={Austral. Nat. Univ.},
      place={Canberra},
   },
   date={1986},
   pages={163--176},
}

\bib{rubinstein}{article}{
   author={Rubinstein, J. Hyam},
   title={Minimal surfaces in geometric 3-manifolds},
   conference={
      title={Global theory of minimal surfaces},
   },
   book={
      series={Clay Math. Proc.},
      volume={2},
      publisher={Amer. Math. Soc., Providence, RI},
   },
   date={2005},
   pages={725--746},
}

\bib{sss}{book}{
   author={Scharlemann, Martin},
   author={Schultens, Jennifer},
   author={Saito, Toshio},
   title={Lecture notes on generalized Heegaard splittings},
   note={Three lectures on low-dimensional topology in Kyoto},
   publisher={World Scientific Publishing Co. Pte. Ltd., Hackensack, NJ},
   date={2016},
   pages={viii+130},
   isbn={978-981-3109-11-7},
}

\bib{st}{article}{
   author={Scharlemann, Martin},
   author={Thompson, Abigail},
   title={Thin position for $3$-manifolds},
   conference={
      title={Geometric topology},
      address={Haifa},
      date={1992},
   },
   book={
      series={Contemp. Math.},
      volume={164},
      publisher={Amer. Math. Soc.},
      place={Providence, RI},
   },
   date={1994},
   pages={231--238},
}

\end{biblist}
\end{bibdiv}


\bigskip

\noindent Diane Hoffoss \\
University of San Diego \\
\url{dhoffoss@sandiego.edu} \\

\noindent Joseph Maher \\
CUNY College of Staten Island and CUNY Graduate Center \\
\url{joseph.maher@csi.cuny.edu} \\


\end{document}